\newtheorem{defi}{Definition}[section]
\newtheorem{teo}[defi]{Theorem}
\newtheorem{pro}[defi]{Proposition}
\newtheorem{oss}[defi]{Remark}
\newtheorem{lem}[defi]{Lemma}
\newtheorem{es}[defi]{Example}
\begin{document}


\title{On the Characteristic Curvature Operator}
\author{Vittorio Martino$^{(1)}$ } \addtocounter{footnote}{1}
\footnotetext{Dipartimento di Matematica, Universit\`a di Bologna,
piazza di Porta S.Donato 5, 40127 Bologna, Italy. E-mail address:
{\tt{martino@dm.unibo.it}}}
\date{}
\maketitle

{\noindent\bf Abstract} We introduce the Characteristic Curvature as the curvature of the trajectories of the hamiltonian vector field with respect to the normal direction to the isoenergetic surfaces and by using the Second Fundamental Form we relate it to the Classical and Levi Mean Curvature. Then we prove existence and uniqueness of viscosity solutions for the related Dirichlet problem and we show the Lipschitz regularity of the solutions under suitable hypotheses. Moreover we prove a non existence result on the balls when the prescribed curvature is a positive constant. At the end we show that neither Strong Comparison Principle nor Hopf Lemma do hold for the Characteristic Curvature Operator.

\section{Introduction}
In this paper we introduce the Characteristic Curvature as the curvature of the trajectories of the hamiltonian vector field with respect to the normal direction to the isoenergetic surfaces. Namely, let $H$ be a smooth (let say $C^2$)  hamiltonian function on  $\mathbb{R}^{n+1} \times \mathbb{R}^{n+1}$ equipped with its standard symplectic structure $J$; then the level set $M$ of $H$ corresponding to some noncritical energy value $E$ is a  smooth hypersurface in $\mathbb{R}^{2n+2} $:
$$M=\{ z \in \mathbb{R}^{2n+2} : H(z)=E \}$$
$M$ is sometime referred as a isoenergetic surface of $H$. The hamiltonian vector field $X^H$ is the vector field tangent to $M$, defined as
$$X^H:=J \nabla H$$
The orbits of $X^H$ are the critical points of the Action functional defined on a suitable space of curves; therefore they represent the trajectories of the motion in the generalized phase space. In particular they are curves on $M$: we will define the characteristic curvature $\mathcal{C}^M$ as the normalized curvature of these curves with respect to the normal to $M$. \\
Later, since $M$ is a real hypersurface in $\mathbb{C}^{n+1}$,  by using the Second Fundamental Form and  the Levi Form we relate $\mathcal{C}^M$ to the Classical Mean Curvature  $\mathcal{H}^M$ and to the Levi Mean Curvature $\mathcal{L}^M$. In fact by direct computation it turns out that
$$(2n+1)\mathcal{H}^M=(2n\mathcal{L}^M+\mathcal{C}^M)$$
We want to note that the characteristic curvature $\mathcal{C}^M$  can be use to obtain characterization properties: in fact following the results on the Levi Mean Curvature obtained by Hounie and Lanconelli in \cite{HL} and \cite{HL1} where they proved Alexandrov type theorems for Reinhardt domain in $\mathbb{C}^{2}$ first and under suitable hypotheses in $\mathbb{C}^{n+1}$ for every $n\geq1$ then, it is proved in \cite{io2} an analogous symmetry result for Reinhardt domain in $\mathbb{C}^{n+1}$ using the characteristic curvature $\mathcal{C}^M$.\\
In the sequel we will write explicitly  the corresponding second order differential operator acting on the defining function $H$, in particular when $M$ is locally seen as  the graph of some real function $u$ defined on some open set $\Omega\subseteq\mathbb{R}^{2n+1}$, we will define the characteristic curvature operator $\mathcal{T}$ acting on $u$.
The second order differential operator $\mathcal{T}$ is a quasilinear (highly) degenerate elliptic operator on $\mathbb{R}^{2n+1}$: in fact the principal part depends on the gradient of $u$ and it has $2n$ distinct eigenvectors corresponding to the eigenvalue zero and only one direction of positivity. We will prove under suitable hypotheses existence and uniqueness of viscosity solutions for the associated Dirichlet Problem with prescribed curvature function $k$:
$$\left\{
\begin{array}{ll}
\mathcal{T}u=k & \mbox{in $\Omega,$} \\
u=\varphi, & \mbox{on $\partial\Omega$,}
\end{array}
\right.$$
We will use the classical tools introduced by Crandall, Ishii, Lions in \cite{il}, \cite{cil}. Then we will prove the Lischitz regularity of the solution by using a Bernstein method to obtain a gradient bound for solutions of the regularized operator and then by a limit process argument. Moreover we prove a non existence result on the balls when the prescribed curvature is a positive constant. Similar results were proved by Slodkowski and Tomassini in \cite{st1} for the Levi equation in the case $n=1$, then by Martino and Montanari in \cite{mm} for the Mean Levi Curvature, and by Slodkowski and Tomassini in \cite{st2} and by Da Lio and Montanari in \cite{DM} for the Levi Monge Amp\`{e}re equation.\\
At the end, by mean of two counter examples, we will show that neither the Strong Comparison Principle nor the Hopf Lemma hold for the operator $\mathcal{T}$. This is substantial difference between the highly degenerate Characteristic operator and the Levi Curvature operators for which Lanconelli and Montanari in \cite{ML} proved the Strong Comparison Principle: indeed the principal part of Levi Curvature operators is degenerate only with respect to one direction and when computed on strictly pseudo-convex functions, the $2n$ vector fields respect to which the operator is strictly elliptic satisfy the rank H\"{o}rmander condition.

\section{The characteristic curvature}
Let us consider $\mathbb{R}^{n+1} \times \mathbb{R}^{n+1}$ with its standard Liouville differential 1-form $\lambda$ and its canonical symplectic 2-form $\omega$:\\
if $z=(x,y)\in \mathbb{R}^{n+1} \times \mathbb{R}^{n+1}$, then
$$\lambda=\frac{1}{2}\sum_{k=1}^{n+1} (y_k dx_k - x_k dy_k), \qquad \omega:= d\lambda= \sum_{k=1}^{n+1} (dy_k \wedge dx_k)$$
It holds:
$$\lambda(V)=\frac{1}{2} g(J z,V), \qquad \forall V \in \mathbb{R}^{2n+2}$$
and
$$\omega(V,J U)=d\lambda(V,J U)=g(V,U), \qquad \forall V,U \in \mathbb{R}^{2n+2}$$
where $g(\cdot,\cdot)$ is the standard scalar product in $\mathbb{R}^{2n+2}$, and
$$J=\left(
\begin{array}{cc}
0 & I_{n+1}\\
-I_{n+1} & 0\\
\end{array}
\right)
$$
is the canonical symplectic matrix in $\mathbb{R}^{2n+2}$.
Let us consider now a dynamical system described by a smooth hamiltonian function
$$H:\mathbb{R}^{n+1} \times \mathbb{R}^{n+1} \longrightarrow \mathbb{R}, \qquad z=(x,y)\longmapsto H(x,y)=H(z)$$
and define the Action functional
$$A(\gamma)=\int_{t_0}^{t_1}\Big( \lambda(\dot{\gamma}(t)) -H(\gamma(t)) \Big)dt, \qquad \gamma:[t_0,t_1]\rightarrow\mathbb{R}^{2n+2}$$
By taking the First Variation of $A$ on a suitable space of curves one obtains that critical points of $A$ satisfy the following first order system (Hamilton)
\begin{equation}\label{hamilton}
\left\{
\begin{array}{l}
\dot{x}_k=\displaystyle\frac{\partial H}{\partial y_k}(x,y)\\
\\
\dot{y}_k=\displaystyle -\frac{\partial H}{\partial x_k}(x,y)\\
\end{array}\right.
\qquad k=1,\ldots,n+1
\end{equation}
and a Least Action Principle states that trajectories of motion (in the generalized phase space $\mathbb{R}^{n+1} \times \mathbb{R}^{n+1}$) are solutions of (\ref{hamilton}). Moreover the conservation of energy principle ensures that if $\gamma$ is a critical point for $A$, then $\gamma(t) \in M, \forall t\in [t_0,t_1]$, where $M$ is the hypersurface in $\mathbb{R}^{2n+2}$ $$M=\{ z \in \mathbb{R}^{2n+2} : H(z)=E \}$$
with $E$ some constant such that $DH(z)\neq0$ for all $z\in M$; we will refer to $M$ as the isoenergetic surface of  $H$ of energy $E$.\\
Now by denoting $\displaystyle\frac{\partial H}{\partial x_k}=f_{x_k}$ and
$\displaystyle\frac{\partial H}{\partial y_k}=f_{y_k}$, if
$$DH(x,y)=(f_{x_1},\ldots, f_{x_{n+1}},f_{y_1}, \ldots f_{y_{n+1}})$$
then the hamiltonian vector field for $H$ is the vector field tangent to $M$
$$X^H_z:=\Big(f_{y_1}(z), \ldots f_{y_{n+1}}(z),-f_{x_1}(z),\ldots, -f_{x_{n+1}}(z)\Big)=J \nabla H(z)$$
where, $\nabla H= (DH)^T$. The Hamilton system (\ref{hamilton}) rewrites as
$$\dot{\gamma}(t)=X^H_{\gamma(t)}$$
We want explicitly remark that the direction given by the hamiltonian vector field only depend on $M$ and $J$: in fact if $\widetilde{H}$ is another hamiltonian function having $M$ as its level surface, then the vector fields $X^H=J \nabla H$ and $X^{\widetilde{H}}=J \nabla \widetilde{H}$ are parallel.\\
We want compute the curvature of the trajectories described by the hamiltonian vector field with respect to the normal direction to $M$. Let us introduce the space of these trajectories. Taking the restriction of $\omega$ on $TM$, one has
$$rank(\omega|_{TM})=2n \qquad \mbox{and} \qquad ker(\omega|_{TM})=1$$
We will call the following one-dimensional subspace of the tangent space the space of the characteristic vector fields:
$$K_z=\{ \xi \in T_zM : \; \omega(v,\xi)=0, \; \forall v \in T_zM \}$$
A smooth curve $\gamma \subseteq M$, such that $\dot{\gamma}\in K_\gamma$ is called a characteristic curve on $M$.
Since
$$\omega(v,X^H)=\omega(v,J \nabla H)=g( v,\nabla H) =0, \quad\forall v \in TM$$
therefore $X^H_z \in K_z, \;\forall z \in M$ and its orbits are characteristic curves on $M$.
\begin{defi}\label{characteristiccurvature}
Let $\varepsilon>0$ and $\gamma:(-\varepsilon,\varepsilon)\rightarrow M$ be any smooth curve such that
$$\gamma(0)=z\in M \qquad \mbox{and} \qquad \dot{\gamma}(0)\in K_{\gamma(0)}$$
We will call the characteristic curvature of $M$ at a point $Z$ the following
$$\mathcal{C}^M_{z}:=\frac{g(\ddot{\gamma}(0),N_{z})}{|\dot{\gamma}(0)|^2}$$
where $N_{z}$ is a unit normal direction to $M$ at $z$.\\
We will say $M$ be  strictly $\mathcal{C}$-convex if $\mathcal{C}^M_{z}>0$, for every $z\in M$.
\end{defi}

\noindent
We can obtain a formula for the characteristic curvature only depending on the characteristic curves. In fact let $\gamma:(t_0,t_1)\rightarrow M$ be a characteristic curve, namely $\dot{\gamma}=X^H=J \nabla H $.
A unit normal direction along $\gamma$ is given by
$$N_{\gamma(t)}=-\frac{\nabla H(\gamma(t))}{|\nabla H(\gamma(t))|}=\frac{J\dot{\gamma}(t)}{|\dot{\gamma}(t)|}$$
Then
$$\mathcal{C}^M_{\gamma(t)}:=\frac{g(\ddot{\gamma}(t),J\dot{\gamma}(t))}{|\dot{\gamma}(t)|^3}$$

\begin{oss}
By the previous formula we can see that the characteristic curvature is a scalar invariant under (rigid) symplectic transformations.
\end{oss}

\begin{es}[characteristic curvature of the spheres]\label{characteristiccurvaturesphere}
Let us consider
$$H(x,y)=\frac{|x|^2 + |y|^2}{2}$$
as hamiltonian function; for any positive constant $E$ the isoenergetic surface of $H$ is a sphere $S_R^{2n+1}$ of radius $R=\sqrt{2E}$. We have
$$\nabla H (x,y)=
\left(
\begin{array}{c}
x\\
y\\
\end{array}
\right), \qquad
J \nabla H (x,y)=
\left(
\begin{array}{c}
y\\
-x\\
\end{array}
\right), \qquad
| \nabla H (x,y)|=R
$$
If $\gamma\subseteq S_R^{2n+1}$ solves the hamiltonian system (\ref{hamilton}) then
$$\dot{\gamma}(t)=
\left(
\begin{array}{c}
\dot{x}(t)\\
\dot{y}(t)\\
\end{array}
\right)=
J \nabla H (\gamma(t))=
\left(
\begin{array}{c}
y(t)\\
-x(t)\\
\end{array}
\right), \qquad |\dot{\gamma}(t)|=R$$
The second derivative is
$$\ddot{\gamma}(t)=
\left(
\begin{array}{c}
\ddot{x}(t)\\
\ddot{y}(t)\\
\end{array}
\right)=
\left(
\begin{array}{c}
\dot{y}(t)\\
-\dot{x}(t)\\
\end{array}
\right)=
J \dot{\gamma}(t)
$$
Therefore
$$\mathcal{C}^{S_R^{2n+1}}_{\gamma(t)}:=\frac{g(\ddot{\gamma}(t),J\dot{\gamma}(t))}{|\dot{\gamma}(t)|^3}=
\frac{g(J\dot{\gamma}(t),J\dot{\gamma}(t))}{|\dot{\gamma}(t)|^3}=\frac{1}{R}$$
\end{es}

\begin{es}[characteristic curvature of cylinder type domains - 1]\label{characteristiccurvaturecylinder1}
Let us consider
$$H(x,y)=\frac{|x|^2 }{2}$$
as hamiltonian function in $\mathbb{R}^{2} \times \mathbb{R}^{2}$; for any positive constant $E$ the isoenergetic surface of $H$ is a cylinder domain of type $C_1=S^1_R \times \mathbb{R}^{2}$ with circles of radius $R=\sqrt{2E}$.
We have
$$\nabla H (x,y)=
\left(
\begin{array}{c}
x\\
0\\
\end{array}
\right), \qquad
J \nabla H (x,y)=
\left(
\begin{array}{c}
0\\
-x\\
\end{array}
\right), \qquad
| \nabla H (x,y)|=R
$$
If $\gamma\subseteq C_1$ solves the hamiltonian system (\ref{hamilton}) then
$$\dot{\gamma}(t)=
\left(
\begin{array}{c}
\dot{x}(t)\\
\dot{y}(t)\\
\end{array}
\right)=
J \nabla H (\gamma(t))=
\left(
\begin{array}{c}
0\\
-x(t)\\
\end{array}
\right), \qquad |\dot{\gamma}(t)|=R$$
The second derivative is
$$\ddot{\gamma}(t)=
\left(
\begin{array}{c}
\ddot{x}(t)\\
\ddot{y}(t)\\
\end{array}
\right)=
\left(
\begin{array}{c}
0\\
-\dot{x}(t)\\
\end{array}
\right)=
$$
Therefore
$$\mathcal{C}^{C_1}_{\gamma(t)}:=\frac{g(\ddot{\gamma}(t),J\dot{\gamma}(t))}{|\dot{\gamma}(t)|^3}=
\frac{g(
\left(
\begin{array}{c}
0\\
-\dot{x}(t)\\
\end{array}
\right),
\left(
\begin{array}{c}
-x(t)\\
0\\
\end{array}
\right))}{|\dot{\gamma}(t)|^3}=0$$
\end{es}
\noindent

\begin{es}[characteristic curvature of cylinder type domains - 2]\label{characteristiccurvaturecylinder1}
Let us consider
$$H(x,y)=\frac{x_1^2 + y_1^2}{2}$$
as hamiltonian function in $\mathbb{R}^{2} \times \mathbb{R}^{2}$; for any positive constant $E$ the isoenergetic surface of $H$ is a cylinder domain of type $C_2=S^1_R \times \mathbb{R}^{2}$ with circles of radius $R=\sqrt{2E}$.
We have
$$\nabla H (x,y)=
\left(
\begin{array}{c}
x_1\\
0\\
y_1\\
0\\
\end{array}
\right), \qquad
J \nabla H (x,y)=
\left(
\begin{array}{c}
y_1\\
0\\
-x_1\\
0\\
\end{array}
\right), \qquad
| \nabla H (x,y)|=R
$$
If $\gamma\subseteq C_2$ solves the hamiltonian system (\ref{hamilton}) then
$$\dot{\gamma}(t)=
\left(
\begin{array}{c}
\dot{x_1}(t)\\
\dot{x_2}(t)\\
\dot{y_1}(t)\\
\dot{y_2}(t)\\
\end{array}
\right)=
J \nabla H (\gamma(t))=
\left(
\begin{array}{c}
y_1(t)\\
0\\
-x_1(t)\\
0\\
\end{array}
\right), \qquad |\dot{\gamma}(t)|=R$$
The second derivative is
$$\ddot{\gamma}(t)=
\left(
\begin{array}{c}
\ddot{x_1}(t)\\
\ddot{x_2}(t)\\
\ddot{y_1}(t)\\
\ddot{y_2}(t)\\
\end{array}
\right)=
\left(
\begin{array}{c}
\dot{y_1}(t)\\
0\\
-\dot{x_1}(t)\\
0\\
\end{array}
\right)=
J \dot{\gamma}(t)
$$
Therefore
$$\mathcal{C}^{C_2}_{\gamma(t)}:=\frac{g(\ddot{\gamma}(t),J\dot{\gamma}(t))}{|\dot{\gamma}(t)|^3}=
\frac{g(J\dot{\gamma}(t),J\dot{\gamma}(t))}{|\dot{\gamma}(t)|^3}=\frac{1}{R}$$
\end{es}
\noindent
\begin{oss}
By the previous two examples we can see that the two isometric ipersurfaces $C_1$ and $C_2$ in $\mathbb{R}^{2} \times \mathbb{R}^{2}$ have different characteristic curvature: in fact the isometry that exchanges $x_2$ to $y_1$ is a rigid but not symplectic transformation.
\end{oss}

\section{Relation with the Classical and Levi Mean Curvature}
Let us think of $M$ as a smooth real hypersurface in $\mathbb{C}^{n+1}$ by identifying $\mathbb{C}^{n+1}\approx\mathbb{R}^{2n+2}$, with $z=(z_1,\ldots,z_{n+1}), z_k=x+iy\simeq (x_k,y_k)$. A defining function for $M$ is a function $f:\mathbb{C}^{n+1}\rightarrow \mathbb{R}$ such that
$$\Omega=\{z\in\mathbb{C}^{n+1}:f(z)<0\},\quad M=\partial\Omega=\{z\in\mathbb{C}^{n+1}:f(z)=0\}$$
Therefore we can think of $f=H-E$. Denoting by $N=-\frac{\nabla f}{|\nabla f|}$ the (inner, if $M$ is compact) unit normal,
we define the characteristic direction $T\in TM$ as:
\begin{equation}\label{eq:direzionecaratteristica}
T:=-J(N)=\frac{J \nabla f}{|\nabla f|}
\end{equation}
where $J$ is the standard complex structure in $\mathbb{C}^{n+1}$ and in our case it coincides with the canonical symplectic matrix in $\mathbb{R}^{2n+2}$. Therefore the characteristic direction for $M$ is the normalized hamiltonian vector field.
The complex maximal distribution or Levi distribution $HM$ is the largest
subspace in $TM$ invariant under the action of $J$
\begin{equation}\label{eq:distribuzionedilevi}
HM=TM\cap J(TM)
\end{equation}
i.e., a vector field $X \in TM$ belongs to $HM$ if and only if
also $JX \in HM $. Moreover, every element in $TM$ can
be written as a direct sum of an element of $HM$ and one of the
space generated by $T$,
\begin{equation}\label{eq:sommadiretta}
TM=HM\oplus\mathbb{R}T
\end{equation}
where $dim(HM)=2n$ and the sum is $g$-orthogonal:
\begin{equation}\label{eq:ortogonale}
\forall X\in HM\quad g(T,X)=0
\end{equation}
Let us denote by $\nabla$ the Levi-Civita connection in
$\mathbb{C}^{n+1}$: we recall that both $\nabla$ and $g$ are compatible with the complex structure  $J$, i.e.:
\begin{equation}\label{eq:compatibile}
J\nabla=\nabla J,\quad  g(\cdot,\cdot)=g(J(\cdot),J(\cdot))
\end{equation}

\noindent
The second fundamental form $h$ on $M$ is defined as:
\begin{equation}\label{eq:secondaformafondamentale}
h(V,W)= g(\nabla_V W,N),\;\forall V,W\in TM
\end{equation}

\noindent
The Levi form $l$ is the hermitian operator on $HM$ defined in the following way:\\
$\forall X_1,X_2\in HM$, if $Z_1=X_1-iJ(X_1)$ and
$Z_2=X_2-iJ(X_2)$, then
\begin{equation}\label{eq:formadilevi}
l(Z_1,\bar Z_2)= g({\nabla}_{Z_1} \bar Z_2, N)
\end{equation}
We can then compare the Levi form with the second fundamental form by using
the identity (see \cite{bog}, Chap.10, Theorem 2):
\begin{equation}\label{eq:bogges}
\forall X\in HM,\quad l(Z,\bar Z)=h(X,X)+h(J(X),J(X))
\end{equation}
Let now $\{X_1,\ldots,X_{n},Y_1,\ldots,Y_{n}\}$, with $Y_k=J X_k$, be an orthonormal basis of the horizontal space $HM$; then the Second Fundamental Form has the following structure
$$h=\left(
\begin{array}{cccccc}
 h(X_k,X_k) & h(X_k,Y_j)  & h(X_k,T)  \\
 h(Y_j,X_k) & h(Y_j,Y_j)  & h(Y_j,T)  \\
 h(T,X_k)   & h(T,Y_k)    & h(T,T)    \\
\end{array}\right)$$
with $k$ and $j$ running in $1,\ldots,n$.
Moreover, by the very definition of characteristic curvature we have that for every $z\in M$
$$h_z(T,T)=g(\nabla_{T} T, N)=\mathcal{C}^M_z $$
\begin{oss}
By the previous identification we can see that the characteristic curvature depends only on $M$ and on the complex structure $J$, therefore it is a scalar invariant under (rigid) holomorphic transformations.
\end{oss}

\noindent
The classical mean curvature $H^M$ and the Levi mean curvature $L^M$
are respectively:
\begin{equation}\label{eq:curvaturemedie}
\mathcal{H}^M=\displaystyle\frac{1}{2n+1}tra(h),\quad
\mathcal{L}^M=\displaystyle\frac{1}{n}tra(l)
\end{equation}
where $tra$ is the canonical trace operator. Therefore a direct computation
leads to the relation between $\mathcal{H}^M$, $\mathcal{L}^M$ and $\mathcal{C}^M$:
\begin{equation}\label{eq:legame}
(2n+1)\mathcal{H}^M=(2n\mathcal{L}^M+\mathcal{C}^M)
\end{equation}

\section{The operator}
Let $f$ be a smooth defining function for $M$, $f:\mathbb{R}^{n+1}\times\mathbb{R}^{n+1}\rightarrow \mathbb{R}$ such that
$$\Omega=\{z=(x,y)\in\mathbb{R}^{2n+2}:f(z)<0\},\quad M=\partial\Omega=\{z\in\mathbb{R}^{2n+2}:f(z)=0\}$$
with $\nabla f(z)\neq 0$ for all $z \in M$. The hamiltonian vector field related to $f$ is
$$X^f=J \nabla f=
\left(
\begin{array}{cc}
0 & I_{n+1}\\
-I_{n+1} & 0\\
\end{array}
\right)
\left(
\begin{array}{c}
f_x\\
f_y\\
\end{array}
\right)=
\left(
\begin{array}{c}
f_y\\
-f_x\\
\end{array}
\right)$$
A smooth characteristic curve on $M$ then satisfies
$$\dot{\gamma}(t)=X^f_{\gamma(t)}, \qquad |\dot{\gamma}(t)|=|X^f_{\gamma(t)}|=|\nabla f(\gamma(t))|$$
The second derivative is
$$\ddot{\gamma}(t)=\frac{d}{dt}X^f_{\gamma(t)}=\frac{d}{dt}\Big(J \nabla f(\gamma(t))\Big)=$$
$$=J D^2f(\gamma(t))\dot{\gamma}(t)=J D^2f(\gamma(t))J \nabla f(\gamma(t))$$
Therefore for any $z \in M$ we have
$$\mathcal{C}^M_{z}:=\frac{1}{|\nabla f(z)|^3}g(D^2f(z)J \nabla f(z),J \nabla f(z))$$
Let us introduce the following $(2n+2)\times (2n+2)$ symmetric matrix depending on $Df(z)$:
$$A(D f(z))=J \nabla f(z) \otimes J \nabla f(z)=$$
$$=\left(
\begin{array}{c}
f_y\\
-f_x\\
\end{array}
\right)
\begin{array}{cc}
\Big(f_y & -f_x \Big)\\
 &\\
\end{array}
=
\left(
\begin{array}{rr}
 f_y \otimes f_y  &  -f_y \otimes f_x\\
-f_x \otimes f_y  &   f_x \otimes f_x\\
\end{array}
\right)
$$
Then the characteristic operator $\mathcal{T}$ is the differential second order operator acting on $f$ in the following way:
$$\mathcal{T}f (z):=\frac{1}{|D f(z)|^3}tra \Big( A(Df(z)) D^2f(z) \Big)$$
Now we are interested in finding an expression for $\mathcal{T}$ when we locally consider the ipersurface $M$ as the graph of some function $u:\mathbb{R}^{2n+1}\supseteq \Omega\rightarrow \mathbb{R}$ such that $(\xi,u(\xi)) \in M$ for all $\xi \in \Omega$.
Let us call then
$$x=(x_1,\ldots,x_n), \quad y=(y_1,\ldots,y_n), \quad x_{n+1}=t, \quad y_{n+1}=s, \quad \xi=(x,y,t)$$
and take as defining function
$$f(z)=f(x,y,t,s)=u(x,y,t)-s=u(\xi)-s, \qquad |Df|^2=1+|Du|^2$$
By defining the following symmetric matrix depending on $Du$
\begin{equation}\label{A(Du)}
A(Du)=
\left(
\begin{array}{rrr}
 u_y \otimes u_y  &  -u_y \otimes u_x & -u_y \\
-u_x \otimes u_y  &   u_x \otimes u_x &  u_x \\
-u_y              &   u_x             &   1  \\
\end{array}
\right)
\end{equation}
finally we have
$$\mathcal{T}u:=\displaystyle\frac{1}{(1+|Du|^2)^{\frac{3}{2}}}tra \Big( A(Du)\, D^2u \Big)$$

\begin{es}[n=1]\label{n=1}
Let $\Omega \subseteq \mathbb{R}^{3}$ be an open set, and $u: \Omega\rightarrow \mathbb{R}$ a $C^2$ function. Then
$$A(Du)=
\left(
\begin{array}{ccc}
 u^2_y    &  -u_y u_x & -u_y \\
-u_x u_y  &   u^2_x   &  u_x \\
-u_y      &   u_x     &   1  \\
\end{array}
\right)$$
and
$$\mathcal{T}u:=\displaystyle\frac{1}{(1+|Du|^2)^{\frac{3}{2}}}tra \Big( A(Du)\, D^2u \Big)=$$
$$\displaystyle\frac{1}{(1+|Du|^2)^{\frac{3}{2}}}
\Big( u^2_y u_{xx} + u^2_x u_{yy} + u_{tt} -2 u_x u_y u_{xy} +2 u_x u_{yt} -2 u_y u_{xt} \Big)$$
\end{es}
\noindent
The characteristic operator $\mathcal{T}$ is a second order quasilinear (highly) degenerate elliptic operator on $\mathbb{R}^{2n+1}$: in fact by (\ref{A(Du)}) we can see that the following $2n$ independent vector fields
$$\partial_{x_k} + u_{y_k}\partial_t \quad , \qquad
\partial_{y_k} - u_{x_k}\partial_t \quad ,\qquad k=1,\ldots,n$$
are eigenvectors for $A(Du)$ with eigenvalue identically equals to zero; instead the vector field
$$-u_{y_1}\partial_{x_1} -u_{y_n}\partial_{x_n} +u_{x_1}\partial_{y_1} +u_{x_n}\partial_{y_n} +\partial_t $$
is an eigenvector with eigenvalue equals to $(1+|u_x|^2+|u_y|^2)$.\\
We will call for the sake of simplicity
$$ \widetilde{A}(p)=\frac{1}{(1+|p|^2)^{\frac{3}{2}}}A(p),\quad \forall p\in \mathbb{R}^{2n+1}$$
therefore
$$\mathcal{T}u:=\displaystyle\frac{1}{(1+|Du|^2)^{\frac{3}{2}}}tra \Big( A(Du)\, D^2u \Big)=
tra \Big( \widetilde{A}(Du)\, D^2u \Big)$$
Let now
$$\sigma(Du)=
\left(
\begin{array}{c}
-u_y\\
u_x\\
1\\
\end{array}
\right),\qquad
\widetilde{\sigma}(Du)=\frac{1}{(1+|p|^2)^{\frac{3}{4}}}\sigma(Du)$$
then
$$A(Du)=\sigma(Du)\sigma(Du)^T,\qquad \widetilde{A}(Du)=\widetilde{\sigma}(Du)\widetilde{\sigma}(Du)^T$$

\section{Viscosity solutions}
Let $\Omega$ be a bounded open set in $\mathbb{R}^{n}$ with $n=2N+1$ for some $N>0$. Let us consider $$F:\Omega\times\mathbb{R}\times\mathbb{R}^{n}\times S(n)\longrightarrow\mathbb{R}, \qquad
F(x,r,p,\Lambda):=-tra(\widetilde{A}(p)\Lambda)+k(x,r)$$
where $S(n)$ is the set of the symmetric matrices $n\times n$, and $k:\Omega\times\mathbb{R}\rightarrow \mathbb{R}$ is a continuous function. We recall that $J^{2,+}u(x_0)$ is the set of the pairs $(p,X)\in \mathbb{R}^n \times S(N)$ such that
$$u(x)\leq u(x_0)+\langle  p,(x-x_0)\rangle +\frac{1}{2} \langle X (x-x_0),(x-x_0)\rangle+
o(|x-x_0|^2)$$
 as $ x\rightarrow x_0 $. The set ${J}^{2,-} {u}(x_0)$ is analogously defined.

\begin{defi}\label{viscdef1}
We  say that  $u\in USC(\overline\Omega)$ is a viscosity subsolution  of $F=0$ if
$$F(x,u(x),p,\Lambda)\leq 0\quad \forall x\in\Omega\quad\textrm{and}\quad (p,\Lambda)\in J^{2,+}u(x)$$
Viscosity supersolutions are analogously defined with the right change of signs.\\
A viscosity solution of $F=0$ is a function $u$ that is both subsolution and supersolution.
\end{defi}
\begin{oss}\label{viscdefequiv}
We recall that it is equivalent to say that  $u\in USC(\overline\Omega)$ (resp. $v\in LSC(\overline\Omega)$) is a viscosity subsolution (resp. supersolution) of $F=0$ if for all $\phi\in C^2(\overline\Omega)$ the following holds: at each local maximum $x_0$ (resp. local minimum)  of $u-\phi$ ($v-\phi$)
\[
F(x_0,u(x_0),D\phi(x_0),D^2\phi(x_0))\leq 0 $$
$$ ( {\textrm
resp.}\, F(x_0,v(x_0),D\phi(x_0),D^2\phi(x_0))\geq 0 )
\]
\end{oss}
\begin{defi} \label{viscdef2}
A function $u\in USC(\overline\Omega)$ (resp. $v\in LSC(\overline\Omega)$) is said to be a viscosity subsolution
(resp. supersolution) of the Dirichlet problem
$$\left\{\begin{array}{ll} F(x,u,Du,D^2u)=0 & \mbox{in
$\Omega,$} \\ u(x)=\varphi(x), & \mbox{on $\partial\Omega$,}
\end{array} \right.\eqno(DP)$$
where $\varphi\in C(\partial\Omega)$,   if $u$ is a viscosity subsolution (resp. $v$ is a supersolution) of $F=0$ such that $u\leq\varphi$ (resp. $v\geq \varphi$) on $\partial \Omega.$
\end{defi}

\noindent
In the sequel when we talk about sub- and supersolutions of
$(DP),$ we will always mean in a viscosity sense.\\

\noindent
We explicitly remark   that $u\in C^2(\Omega)$ is
a viscosity solution of $F=0$ if and only if  $u$ is a classical solution of
$F=0$.

\noindent
If $k$ is a prescribed continuous function, non negative and strictly increasing with respect to $u$, then $F$ is proper according the definition in \cite{cil} and then the comparison principle for $F$  holds  (see \cite{cil}). Anyway, since we are interested even at case when the characteristic curvature is constant, we would like to have the comparison principle for $F$ even when $k$ is not strictly increasing with respect to $u$, but it doesn't depend on $x$. \\
In order to prove the comparison principle for this case we will adapt the proof given for the strictly monotone case: we need two standard lemmas and we refer the reader to \cite{cil} for the proofs.

\begin{lem}\label{lem1}
Let $\Omega\subseteq\mathbb{R}^{n}$ and $u\in USC(\overline\Omega),\quad
v\in LSC(\overline\Omega)$. Define
$$\displaystyle M_{\varepsilon}=\sup_{\overline{\Omega}\times\overline{\Omega}}
\left(u(x)-v(y)-\frac{|x-y|^{2}}{2\varepsilon}\right)$$
with $\varepsilon>0$. Let us suppose there exist
$(x_{\varepsilon},y_{\varepsilon})\in\overline{\Omega}\times\overline{\Omega}$,
such that:
$$\displaystyle \lim_{\varepsilon\rightarrow 0}\left(M_{\varepsilon}-
(u(x_{\varepsilon})-v(y_{\varepsilon})-\frac{|x_{\varepsilon}-y_{\varepsilon}|^{2}}{2\varepsilon})\right)=0$$
Then we have:
$$\begin{array}{l}
i)\quad \displaystyle
\lim_{\varepsilon\rightarrow 0}\frac{|x_{\varepsilon}-y_{\varepsilon}|^{2}}{\varepsilon}=0\\\\
ii)\quad \displaystyle
\lim_{\varepsilon\rightarrow 0}M_{\varepsilon}=u(\widehat{x})-v(\widehat{x})=\sup_{\overline{\Omega}}(u(x)-v(x))
\end{array}$$
where $\widehat{x}$ is the limit of $x_{\varepsilon}$ (up to subsequences) as
$\varepsilon\rightarrow 0$.\\
\end{lem}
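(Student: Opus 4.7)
The plan is to combine the monotonicity of $\varepsilon\mapsto M_\varepsilon$ with the compactness of $\overline{\Omega}$ and the semicontinuity of $u,v$. First I would observe that setting $y=x$ in the penalised sup yields $M_\varepsilon \geq M := \sup_{\overline\Omega}(u-v)$, and that $M_\varepsilon$ is monotone nonincreasing as $\varepsilon\to 0^+$ (since shrinking $\varepsilon$ strengthens the penalty); hence $\overline M := \lim_{\varepsilon\to 0} M_\varepsilon$ exists with $\overline M \geq M$. Because $u$ is upper semicontinuous and $v$ is lower semicontinuous on the compact set $\overline\Omega$, the quantity $u(x_\varepsilon)-v(y_\varepsilon)$ is bounded above by a constant independent of $\varepsilon$; this combined with the hypothesis $u(x_\varepsilon)-v(y_\varepsilon)-|x_\varepsilon-y_\varepsilon|^2/(2\varepsilon)\to \overline M$ forces the penalty $|x_\varepsilon-y_\varepsilon|^2/(2\varepsilon)$ to stay bounded, and since $\varepsilon\to 0$ this already yields $|x_\varepsilon-y_\varepsilon|\to 0$.

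Next I would extract a subsequence (still labelled by $\varepsilon$) along which $x_\varepsilon\to \widehat x\in\overline\Omega$ and, by the previous step, $y_\varepsilon\to \widehat x$ as well, and along which $\beta := \lim |x_\varepsilon-y_\varepsilon|^2/(2\varepsilon) \in [0,\infty)$ exists. Passing to the limit in
\[
u(x_\varepsilon)-v(y_\varepsilon) = M_\varepsilon + \frac{|x_\varepsilon-y_\varepsilon|^2}{2\varepsilon} + o(1),
\]
the right-hand side tends to $\overline M + \beta$, while the left-hand side is controlled from above via $\limsup u(x_\varepsilon) - \liminf v(y_\varepsilon) \leq u(\widehat x)-v(\widehat x)$ by semicontinuity. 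Therefore
\[
\overline M + \beta \;\leq\; u(\widehat x)-v(\widehat x) \;\leq\; M \;\leq\; \overline M,
\]
which squeezes $\beta = 0$ and $\overline M = M = u(\widehat x)-v(\widehat x)$. Since the limit $\beta=0$ is obtained along \emph{every} convergent subsequence of the nonnegative sequence $|x_\varepsilon-y_\varepsilon|^2/(2\varepsilon)$, the full sequence converges to $0$, giving (i); and $M_\varepsilon\to \overline M = u(\widehat x)-v(\widehat x)=\sup_{\overline\Omega}(u-v)$ is exactly (ii).

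The only delicate point, which I regard as the main obstacle, is the chain of (semi)continuity estimates $\limsup [u(x_\varepsilon)-v(y_\varepsilon)] \leq u(\widehat x)-v(\widehat x)$. This requires using separately that $\limsup u(x_\varepsilon) \leq u(\widehat x)$ (upper semicontinuity of $u$) and $\liminf v(y_\varepsilon) \geq v(\widehat x)$ (lower semicontinuity of $v$), together with the elementary inequality $\limsup(a_\varepsilon - b_\varepsilon) \leq \limsup a_\varepsilon - \liminf b_\varepsilon$; everything else is a clean two-sided squeezing between $M$ and $\overline M$.
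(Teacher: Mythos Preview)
Your argument is correct and is precisely the standard proof of this lemma (this is Lemma~3.1 in Crandall--Ishii--Lions \cite{cil}). Note that the paper itself does not supply a proof of this statement: it introduces the lemma as standard and explicitly refers the reader to \cite{cil}, so there is no ``paper's own proof'' to compare against beyond that reference. One small implicit hypothesis you rely on---compactness of $\overline{\Omega}$, which guarantees both the boundedness of $u(x_\varepsilon)-v(y_\varepsilon)$ from above and the existence of the subsequential limit $\widehat{x}$---is not stated in the lemma as written but is in force throughout the paper ($\Omega$ is always a bounded open set) and is exactly the assumption used in \cite{cil}.
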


\begin{lem}\label{lem2}
Let $\Sigma_{i}\subseteq\mathbb{R}^{n_{i}}$ be a locally compact set and $u_{i} \in USC(\Sigma_{i})$,
for $i=1,\ldots,k$. Let us define:
$$\Sigma=\Sigma_{1}\times \ldots\times \Sigma_{k}$$
$$w(x)=u_{1}(x_{1})+\ldots+u_{k}(x_{k}), \qquad\textrm{con}\quad x=(x_{1},\ldots,x_{k})\in \Sigma$$
$$n=n_{1}+\ldots+n_{k}$$ and suppose that
$\widehat{x}=(\widehat{x}_{1},\ldots,\widehat{x}_{k})$ is a local maximum for $w(x)-\varphi(x)$, where $\varphi\in C^{2}$ in a neighborhood of $\widehat{x}$. Then, for every $\varepsilon>0$ there exists
$\Lambda_{i}\in S(n_{i})$ such that
$$(D_{x_{i}}\varphi(\widehat{x}),\Lambda_{i})\in \overline{J}^{2,+}_{\Sigma_{i}}u_{i}(\widehat{x_{i}}),
\quad\textrm{for}\quad i=1,\ldots,k$$ and the diagonal blocks matrix $\Lambda_{i}$ satisfies
$$\displaystyle-\left(\frac{1}{\varepsilon}+||\Phi||\right)I_{n}\leq
\left(\begin{array}{ccc} \Lambda_{1} & \ldots & 0\\
\vdots &  \ddots & \vdots\\
0 &\ldots & \Lambda_{k}
\end{array}\right)
\leq \Phi+\varepsilon \Phi^{2}$$ with
$\Phi=D^{2}\varphi(\widehat{x})\in S(n)$ and the norm for
$\Phi$ is:
$$||\Phi||=\sup\{|\lambda|:\lambda\quad\textrm{is an eigenvalue of $\Phi$}\}
=\sup\{|\langle \Phi\xi,\xi \rangle|:|\xi|\leq1\}$$
\end{lem}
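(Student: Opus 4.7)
The plan is to reproduce the classical Crandall--Ishii sup-convolution argument from \cite{cil}. First I would reduce to the case where $\widehat{x}$ is a strict local maximum of $w-\varphi$ by replacing $\varphi$ with $\varphi(x)+\delta|x-\widehat{x}|^{4}$; this leaves $\Phi=D^{2}\varphi(\widehat{x})$ unchanged and the additional quartic term can be absorbed into $\varepsilon$ at the end by sending $\delta\to 0$.

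Next, regularize each semi-continuous function by its sup-convolution
$$u_{i}^{\lambda}(x_{i})=\sup_{y_{i}\in\Sigma_{i}}\left(u_{i}(y_{i})-\frac{1}{2\lambda}|x_{i}-y_{i}|^{2}\right),\qquad \lambda>0$$
Each $u_{i}^{\lambda}$ is semi-convex (the function $u_{i}^{\lambda}+\tfrac{1}{2\lambda}|\cdot|^{2}$ is convex), decreases monotonically to $u_{i}$ as $\lambda\downarrow 0$, and inherits the local maximum property of the sum: the function $w^{\lambda}(x)-\varphi(x)$ with $w^{\lambda}=\sum_{i}u_{i}^{\lambda}$ has a maximum point $\widehat{x}^{\lambda}$ converging to $\widehat{x}$.

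Since $w^{\lambda}-\varphi$ is semi-convex, Jensen's lemma applied near $\widehat{x}^{\lambda}$ produces, for arbitrarily small linear perturbations $p$, a point $\widetilde{x}$ at which $w^{\lambda}-\varphi-\langle p,\cdot\rangle$ attains a local maximum and at which every $u_{i}^{\lambda}$ is classically twice differentiable by Alexandrov's theorem (using that $D^{2}w^{\lambda}$ is block-diagonal). Writing $\Lambda_{i}^{\lambda}=D^{2}u_{i}^{\lambda}(\widetilde{x}_{i})$ and using the maximum condition plus a standard perturbation, one gets the upper bound
$$\mathrm{diag}(\Lambda_{1}^{\lambda},\dots,\Lambda_{k}^{\lambda})\leq \Phi+\varepsilon \Phi^{2}$$
while semi-convexity supplies the lower bound $\Lambda_{i}^{\lambda}\geq -\tfrac{1}{\lambda}I_{n_{i}}$; calibrating $\lambda$ in terms of $\varepsilon$ and $\|\Phi\|$ yields $-\bigl(\tfrac{1}{\varepsilon}+\|\Phi\|\bigr)I_{n}\leq \mathrm{diag}(\Lambda_{i}^{\lambda})$. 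Finally, pass to the limit $\lambda\downarrow 0$ and absorb the Jensen perturbation: the stability of the \emph{closed} set $\overline{J}^{2,+}_{\Sigma_{i}}u_{i}$ under this limit places each $(D_{x_{i}}\varphi(\widehat{x}),\Lambda_{i})$ into $\overline{J}^{2,+}_{\Sigma_{i}}u_{i}(\widehat{x}_{i})$, while the matrix inequalities are preserved because they are closed conditions.

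The main obstacle, as usual in this lemma, is producing the upper and lower matrix bounds \emph{simultaneously}. Sup-convolution is well adapted to the lower bound (semi-convexity is built in), and the maximum condition at an Alexandrov point gives the upper bound against $D^{2}\varphi(\widetilde{x})$; what is delicate is that these two estimates refer a priori to different points, so one must use Jensen's lemma to produce a common point while keeping the perturbation small enough that $D^{2}\varphi(\widetilde{x})$ can be replaced by $\Phi+\varepsilon\Phi^{2}$. Balancing these two approximations is the technical core.
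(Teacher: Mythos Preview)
The paper does not supply its own proof of this lemma; it simply quotes the statement and refers the reader to \cite{cil} for the argument. Your sketch is precisely the standard sup-convolution / Jensen--Alexandrov proof given in \cite{cil} (Theorem~3.2 and its appendix), so your approach coincides with the one the paper invokes.
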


\noindent
We have the following result:

\begin{pro} (comparison principle)\label{tct}\\
Let $\Omega\subseteq\mathbb{R}^{n}$, a bounded open set, and let
$k:\Omega\times\mathbb{R}\rightarrow\mathbb{R}$  be a prescribed continuous function, non negative,
not decreasing with respect to $u$ and not depending on $x$.
Then the comparison principle for $F$ holds, namely:
if $\underline{u}$ and $\overline{u}$ are respectively viscosity sub- and supersolution of $F=0$ in
$\Omega$ such that  $\underline{u}(y)\leq \overline{u}(y)$ for all $y\in
\partial\Omega$, then $\underline{u}(x)\leq \overline{u}(x)$
for every $x\in \overline{\Omega}$.
\end{pro}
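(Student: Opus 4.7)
My plan is to carry out the Crandall--Ishii--Lions doubling-of-variables argument, with one technical adaptation needed to cope with the fact that $k$ is only non-decreasing in $u$.

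Argue by contradiction: assume $M:=\sup_{\overline\Omega}(\underline u-\overline u)>0$. Since $\underline u\leq\overline u$ on $\partial\Omega$ this positive supremum is attained at some interior point $\hat x$. Consider
$$M_\varepsilon := \sup_{\overline\Omega\times\overline\Omega}\Bigl(\underline u(x)-\overline u(y)-\frac{|x-y|^{2}}{2\varepsilon}\Bigr)$$
and pick a maximiser $(x_\varepsilon,y_\varepsilon)$. Lemma \ref{lem1} gives $(x_\varepsilon,y_\varepsilon)\to(\hat x,\hat x)$, $|x_\varepsilon-y_\varepsilon|^{2}/\varepsilon\to 0$ and $M_\varepsilon\to M$, so $x_\varepsilon,y_\varepsilon\in\Omega$ for $\varepsilon$ small. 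Applying Lemma \ref{lem2} with $\varphi(x,y)=|x-y|^{2}/(2\varepsilon)$ produces matrices $X_\varepsilon,Y_\varepsilon$ and the common gradient $p_\varepsilon:=(x_\varepsilon-y_\varepsilon)/\varepsilon$ such that $(p_\varepsilon,X_\varepsilon)\in\overline J^{2,+}\underline u(x_\varepsilon)$, $(p_\varepsilon,Y_\varepsilon)\in\overline J^{2,-}\overline u(y_\varepsilon)$, and
$$\begin{pmatrix} X_\varepsilon & 0\\ 0 & -Y_\varepsilon \end{pmatrix} \leq \frac{3}{\varepsilon}\begin{pmatrix} I & -I\\ -I & I \end{pmatrix};$$
testing against vectors of the form $(\xi,\xi)$ gives $X_\varepsilon\leq Y_\varepsilon$.

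Feeding these into the sub- and supersolution inequalities for $\underline u$ and $\overline u$ and subtracting yields
$$tra\bigl(\widetilde A(p_\varepsilon)(Y_\varepsilon-X_\varepsilon)\bigr) + k(\underline u(x_\varepsilon)) - k(\overline u(y_\varepsilon)) \leq 0.$$
The factorisation $\widetilde A(p)=\widetilde\sigma(p)\widetilde\sigma(p)^{T}$ from Section 4 shows $\widetilde A(p_\varepsilon)\geq 0$; combined with $Y_\varepsilon-X_\varepsilon\geq 0$ the trace term is non-negative, so
$$k(\underline u(x_\varepsilon))\leq k(\overline u(y_\varepsilon)).$$
If $k$ were strictly increasing this would collide immediately with $\underline u(x_\varepsilon)-\overline u(y_\varepsilon)\geq M_\varepsilon>0$ and the proof would finish in the standard CIL manner.

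To dispose of the merely non-decreasing case, I would precede the doubling by perturbing $\underline u$ to a genuine strict subsolution $\underline u_\delta=\underline u-\delta\chi$, with $\chi$ a smooth bounded non-negative function engineered so that the extra contribution $-\delta\,tra(\widetilde A(\cdot)D^{2}\chi)$ tilts the subsolution inequality strictly downward---for instance a convex quadratic aligned with the strictly positive direction $\widetilde\sigma(p)$ of $\widetilde A$. For $\delta$ small $\sup_{\overline\Omega}(\underline u_\delta-\overline u)$ remains positive and the doubling now delivers $k(\underline u_\delta(x_\varepsilon))\leq k(\overline u(y_\varepsilon))-\eta(\delta)$ with some slack $\eta(\delta)>0$; passing $\varepsilon\to 0$ and then $\delta\to 0$ contradicts the monotonicity of $k$. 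The main obstacle is precisely this perturbation step: since $\mathcal T$ is quasilinear, the argument of $\widetilde A$ is perturbed as well, so one must show that the genuine second-order gain dominates the error $tra\bigl((\widetilde A(p_\varepsilon-\delta D\chi)-\widetilde A(p_\varepsilon))X_\varepsilon\bigr)$ uniformly in $\varepsilon$, using $\|X_\varepsilon\|\lesssim 1/\varepsilon$ from Lemma \ref{lem2} together with $|x_\varepsilon-y_\varepsilon|^{2}/\varepsilon\to 0$.
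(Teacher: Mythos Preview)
Your overall strategy---doubling of variables plus a perturbation of the subsolution to manufacture strictness---is exactly the paper's. The first half of your argument (up to $k(\underline u(x_\varepsilon))\le k(\overline u(y_\varepsilon))$) is clean and correct. The gap is in the last paragraph, where the perturbation is only sketched and the sketch will not close as written.

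First a sign slip: with $\underline u_\delta=\underline u-\delta\chi$ and $\chi$ convex, the extra second-order term enters the subsolution inequality with the \emph{wrong} sign; you need $\underline u_\delta=\underline u+\delta\chi$ (this is what the paper does, with $\chi=h$). More seriously, your proposed error control cannot balance. After the perturbation the two jets carry gradients $p_\varepsilon$ and $p_\varepsilon^\delta:=p_\varepsilon-\delta D\chi$, so the cross term you must dominate is
\[
\mathrm{tra}\bigl(A(p_\varepsilon^\delta)X_\varepsilon - A(p_\varepsilon)Y_\varepsilon\bigr).
\]
Bounding this by $\|X_\varepsilon\|\cdot\|\widetilde A(p_\varepsilon^\delta)-\widetilde A(p_\varepsilon)\|$ gives only $O(\delta/\varepsilon)$, while the strictness gain is $O(\delta)$; no choice of $\delta,\varepsilon\to 0$ makes $\delta\gg\delta/\varepsilon$. (Your appeal to $|x_\varepsilon-y_\varepsilon|^2/\varepsilon\to 0$ is irrelevant here: the gradient mismatch comes from $\delta D\chi$, not from $x_\varepsilon-y_\varepsilon$.) The paper instead exploits the rank-one structure $A(p)=\sigma(p)\sigma(p)^T$ directly inside the block matrix inequality: testing on $(\sigma(p_\varepsilon^\delta),\sigma(p_\varepsilon))$ yields
\[
\sigma(p_\varepsilon^\delta)^TX_\varepsilon\,\sigma(p_\varepsilon^\delta)-\sigma(p_\varepsilon)^TY_\varepsilon\,\sigma(p_\varepsilon)\;\le\;\frac{3}{\varepsilon}\,|\sigma(p_\varepsilon^\delta)-\sigma(p_\varepsilon)|^2\;\le\;\frac{3L_\sigma^2\,\delta^2}{\varepsilon}|D\chi|^2,
\]
which is \emph{quadratic} in $\delta$. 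Now the balance $\delta\gg\delta^2/\varepsilon$ is achievable; the paper couples $\delta=\varepsilon^{2}$ and sends $\varepsilon\to 0$. This structural use of the factorisation (the ``Ishii--Lions $3/\varepsilon$ trick'' with \emph{different} vectors on the two blocks) is the missing idea in your sketch.
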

\begin{proof}
Let us define for $m\in\mathbb{N}$,
$u_{m}(x)=\underline{u}(x)+\displaystyle\frac{1}{m}h(x)$ with
$h(x)=g\Big(\frac{|x|^2}{2}\Big)$ where $g\in C^2$ and $g',g''>0$.
We have $$Dh(x)=g' x, \qquad D^2h(x)=g'' x\otimes x+g' I_{n}$$
and
$$tra(A(Dh)\;D^{2}h)\geq g'\inf_{p\in\mathbb{R}^n}(tra(A(p))=g'>0$$
Moreover we choose $g$ in such a way that $\|h\|_\infty<+\infty$. Our aim
is  to show that
$$\sup_{\Omega}(u_m-\overline{u})\leq \frac{1}{m}\|h\|_\infty$$
Suppose by contradiction that for all $m$
large enough we have
$$M_m=\max_{\overline\Omega}(u_m-\overline{u})>\frac{1}{m} \|h\|_\infty$$
Since  we have $\underline{u}(x)\le \overline{u}(x)$ for all
$x\in\partial\Omega$,  such a maximum is achieved at an interior
point $\tilde x$ (depending on $m$). For all $\varepsilon>0$ let
us consider the auxiliary function
$$
w_\varepsilon(x,y)=u_m(x)- v(y)-\frac{|x-y|^2}{2\varepsilon}
$$
Let $(x_\varepsilon,y_\varepsilon)$ be a maximum of $w_\varepsilon$
in $\overline \Omega\times \overline \Omega.$ By lemma (\ref{lem1}) we get, up to subsequences,
$x_\varepsilon,y_\varepsilon\to\tilde x\in\overline\Omega,$
and
$$
\begin{array}{c}
 \displaystyle\frac{|x_\varepsilon-y_\varepsilon|^2}{\varepsilon}=o(1), \quad \mbox{as $\varepsilon\rightarrow 0$,}\\
 u_m(x_\varepsilon)- \overline{u}(y_\varepsilon)\to u_m(\tilde x)-\overline{u}(\tilde x)= M_m\\
  u_m(x_\varepsilon)\to u_m(\tilde x), \quad \overline{u}(y_\varepsilon)\to \overline{u}(\tilde x)
\end{array}
$$
We may suppose without restriction that $\tilde x \ne 0$. Since  $\tilde x$ is necessarily in $\Omega,$   for $\varepsilon$ small
enough we have $x_{\varepsilon},y_{\varepsilon}\in\Omega$. There exist now by lemma (\ref{lem2})
$X,Y\in {S}^n$ such that, if $p_\varepsilon:= \displaystyle
\frac{(x_\varepsilon-y_\varepsilon)} {\varepsilon}$,   we have
\begin{eqnarray*} & & (p_\varepsilon,X)\in
{J}^{2,+} {u_m}(x_{\varepsilon}),\qquad (p_\varepsilon,Y)\in{J}^{2,-} \overline{u}(y_{\varepsilon}),
\end{eqnarray*}
\begin{equation}\label{sd2}
-\frac{3}{\varepsilon}Id\leq \left(\begin{array}{cc} X & 0 \\ 0
& -Y \end{array}\right) \leq \frac{3
}{\varepsilon}\left(\begin{array}{cc} I& -I \\ -I & I
\end{array}\right)
 \end{equation}
Moreover  $u_m$ is a strictly viscosity subsolution of
$$F(x,u_m - \frac{1}{m}h(x),Du_m -\frac{1}{m}Dh(x), D^2 u_m)=-\frac{g'}{m}\frac{1}{f(Du_m -\frac{1}{m}Dh(x))}$$
where
$$f(p)=\displaystyle(1+|p|^{2})^{\frac{3}{2}}$$
Therefore By denoting $p_\varepsilon^m=p_\varepsilon-\frac{1}{m}Dh(x)$ we have
$$\frac{g'}{m}f(p_\varepsilon^m)\leq f(p_\varepsilon)F(y_{\varepsilon},\overline{u},p_\varepsilon,Y)-
f(p_\varepsilon^m)F(x_{\varepsilon},\underline{u},p_\varepsilon^m,X)=$$
$$=tra(A(p_\varepsilon^m)X)-tra(A(p_\varepsilon)Y)+
f(p_\varepsilon)k(\overline{u}(y_{\varepsilon}))-f(p_\varepsilon^m)k(\underline{u}(x_{\varepsilon}))$$
Then using (\ref{sd2})  we have
$$\frac{g'}{m}\leq tra(\sigma(p_\varepsilon^m)X\sigma(p_\varepsilon^m)^T-
\sigma(p_\varepsilon)Y\sigma(p_\varepsilon)^T)
+f(p_\varepsilon)k(\overline{u}(y_{\varepsilon}))-f(p_\varepsilon^m)k(\underline{u}(x_{\varepsilon}))\leq$$
$$\leq\frac{3}{\varepsilon}\Big(\sigma(p_\varepsilon^m)-\sigma(p_\varepsilon)\Big)
\Big(\sigma(p_\varepsilon^m)-\sigma(p_\varepsilon)\Big)^T+
f(p_\varepsilon)k(\overline{u}(y_{\varepsilon}))-f(p_\varepsilon^m)k(\underline{u}(x_{\varepsilon}))\leq$$
$$\leq \frac{3 L_{\sigma}^2}{\varepsilon m^2} (g')^2|x_\varepsilon|^2+ f(p_\varepsilon)k(\overline{u}(y_{\varepsilon}))-f(p_\varepsilon^m)k(\underline{u}(x_{\varepsilon}))$$
Now we note that
$$f(p_\varepsilon^m)\approx f(p_\varepsilon), \qquad as \quad m \rightarrow\infty$$
and by hypothesis on $k$ and by lemma (\ref{lem1}) as $\varepsilon$ approaches zero we get
$$k(\overline{u}(\widetilde{x})-k(\underline{u}(\widetilde{x}))\leq0$$
Therefore by choosing $m=\varepsilon^{-2}$ and taking the limit as $\varepsilon$ approaches zero we obtain a contradiction.
\end{proof}

\noindent
We are going to give geometric sufficient conditions on $\Omega$ and on the prescribed curvature $k$ in order to ensure the existence of sub- and supersolutions for $(DP)$. Let us define now the cylinder type hypersurface in $\mathbb{R}^{n+1}$:
$$\Omega_c:= \partial \Omega \times \mathbb{R}$$
In the next result we use the following assumptions:\\
\emph{let $\Omega_c$ be a strictly $\mathcal{C}$-convex hypersurface with}
\begin{equation}\label{curvcilik}
\sup_{s\in\mathbb{R}}k(x,s)<\mathcal{C}^{\Omega_c}_{x} \quad \mbox{\emph{for every $x \in \partial \Omega$.}}
\end{equation}
and\\
\emph{let $R$ be the radius of the smallest ball containing $\Omega$; then}
\begin{equation}\label{diametro}
\sup_{\overline{\Omega}\times\mathbb{R}}k \leq \frac{1}{R}
\end{equation}

\noindent
We can  prove now the following result:
\begin{teo}\label{condgeom}
Let $\partial\Omega\in C^2$ and suppose (\ref{curvcilik}) and (\ref{diametro}) hold.
If $k$ is either strictly increasing with respect to $u$ or not decreasing with respect to $u$ but independent of $x$, then there exist a unique viscosity solution for $(DP)$.
\end{teo}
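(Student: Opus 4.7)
The strategy is Perron's method in the viscosity framework of \cite{cil}. Uniqueness is immediate from the comparison principle: the classical CIL comparison handles the case when $k$ is strictly increasing in $u$, and Proposition \ref{tct} covers the complementary case where $k$ is independent of $x$. For existence one needs global sub- and supersolutions together with pointwise boundary barriers, after which the standard Perron envelope yields a viscosity solution attaining the boundary data.

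Condition (\ref{diametro}) supplies the global sub- and supersolutions. Let $B_R(\xi_0)\supset\Omega$ be a ball of minimal radius $R$. The computation of Example \ref{characteristiccurvaturesphere}, transposed to the graph setting of Section 4, shows that the upper hemisphere $\,U(\xi)=s_0+\sqrt{R^2-|\xi-\xi_0|^2}\,$ satisfies $\mathcal{T}U\equiv-1/R\leq 0\leq k$ and is therefore a classical supersolution, while the lower hemisphere $\,L(\xi)=s_0-\sqrt{R^2-|\xi-\xi_0|^2}\,$ satisfies $\mathcal{T}L\equiv+1/R\geq k$ by (\ref{diametro}) and is a subsolution. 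Vertical translation then delivers $\overline{U}\geq\varphi$ and $\underline{U}\leq\varphi$ on $\partial\Omega$.

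The technical heart of the proof is the construction of pointwise barriers at every $x_0\in\partial\Omega$. An upper barrier is again obtained by translating an upper hemisphere so that it passes through $(x_0,\varphi(x_0))$; its concavity together with the uniform continuity of $\varphi$ ensures it dominates $\varphi$ on a neighborhood of $x_0$ in $\partial\Omega$. The lower barrier is the main obstacle and is where (\ref{curvcilik}) enters crucially. Strict $\mathcal{C}$-convexity of the cylinder $\Omega_c$ with $\mathcal{C}^{\Omega_c}_{x_0}>\sup_s k(x_0,s)$ means that a small normal $C^2$-perturbation of $\Omega_c$ bending into $\Omega\times\mathbb{R}$ can be written locally near $x_0$ as the graph $s=w_{x_0}(\xi)$ of a function with $w_{x_0}(x_0)=\varphi(x_0)$; by strictness, $\mathcal{T}w_{x_0}>k$ classically in a small neighborhood of $x_0$ in $\Omega$, and the perturbation is chosen so that $w_{x_0}\leq\varphi$ on the nearby boundary. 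Gluing with the global subsolution via $\max(w_{x_0},\underline{U})$ produces a global subsolution attaining $\varphi$ at $x_0$.

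Once the barriers are in place, the Perron envelope $u(\xi)=\sup\{v(\xi):v\ \text{is a viscosity subsolution},\ \underline{U}\leq v\leq\overline{U}\}$ is a viscosity solution of $\mathcal{T}u=k$ by the standard Crandall--Ishii--Lions argument, which needs only the comparison principle already at our disposal. The barriers then force $u(\xi)\to\varphi(x_0)$ as $\xi\to x_0$, so $u\in C(\overline\Omega)$ with $u|_{\partial\Omega}=\varphi$; uniqueness follows again from comparison. The conceptually difficult step is precisely the lower barrier: because $\mathcal{T}$ has $2n$ degenerate directions, arbitrary smooth bumps are not subsolutions, and (\ref{curvcilik}) supplies exactly the geometric nondegeneracy needed to adapt a cylinder-type subsolution to $\partial\Omega$ at each boundary point.
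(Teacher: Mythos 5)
Your high-level strategy --- Perron's method, comparison from Proposition~\ref{tct} (or from the strictly monotone case of \cite{cil}), global hemisphere bounds from (\ref{diametro}), and boundary barriers from (\ref{curvcilik}) --- is the same as the paper's, but two steps are packaged differently or left underdeveloped. The paper invokes the Perron theorem of \cite{il} (Proposition II.1), which requires a single global subsolution and supersolution that actually \emph{attain} $\varphi$ on $\partial\Omega$; it constructs these by gluing (via $\max$, $\min$) the near-boundary pieces $\widetilde{\varphi}_\varepsilon\pm\lambda\rho$ (with $\rho$ a $C^2$ defining function for $\Omega$) to an interior hemisphere for the subsolution and a constant for the supersolution, and it treats a merely continuous $\varphi$ by replacing it with smooth approximants $\varphi_\varepsilon$, solving $(DP)$ for each, and passing to the limit by stability of viscosity solutions. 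Your version (global bounds plus pointwise barriers) omits the smoothing step entirely, but your barriers rely on a $C^2$ extension of $\varphi$ near each boundary point, so smoothing cannot be dropped when $\varphi$ is only in $C(\partial\Omega)$.

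The more substantive vagueness is in the lower boundary barrier. You describe it as a ``small normal $C^2$-perturbation of $\Omega_c$ bending into $\Omega\times\mathbb{R}$'' written as a graph $s=w_{x_0}(\xi)$ near $x_0$, with $\mathcal{T}w_{x_0}>k$ ``by strictness.'' But the vertical cylinder is not a graph over $\Omega$, so any such perturbation is not small in the graph variables --- the slope of $w_{x_0}$ must blow up as one approaches $\partial\Omega$ --- and it is then not at all automatic that $\mathcal{T}w_{x_0}$ is close to $\mathcal{C}^{\Omega_c}_{x_0}$. The paper makes this precise and it is the heart of the proof: for $\underline{u}_\varepsilon=\widetilde{\varphi}_\varepsilon+\lambda\rho$ one verifies by direct computation that
$$\lim_{\lambda\to\infty}\mathcal{T}\underline{u}_\varepsilon(x)=\mathcal{C}^{\Omega^{\gamma}_c}_{x}$$
in a collar $V_0=\{-\gamma_0<\rho<0\}$, so that (\ref{curvcilik}) --- which by continuity persists on $\Omega^\gamma_c$ for $0\le\gamma\le\gamma_0$ --- yields $\mathcal{T}\underline{u}_\varepsilon\geq k$ there for $\lambda$ large. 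This $\lambda\to\infty$ identification of the operator on steep graphs with the characteristic curvature of the limiting cylinder is exactly where (\ref{curvcilik}) is used, and it is the step your sketch asserts without proof. Once it is supplied (and the smoothing of $\varphi$ is reinstated), your argument and the paper's coincide.
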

\begin{proof}
Since we have comparison principle for both cases, by the Perron type theorem in (\cite{il}, Proposition II.1), we have that if there exist a subsolution $\underline{u}$ and a supersolution $\overline{u}$ for $(DP)$ such that $\underline{u}=\overline{u}=\varphi$ on $\partial\Omega$, then there exist a unique viscosity solution for $(DP)$.
Therefore we are interested in finding explicit sub- and supersolutions for $(DP)$.\\
Let $\rho\in C^2$ be a defining function for $\Omega$, namely $\rho:\mathbb{R}^n\longrightarrow\mathbb{R}$, such that
$$\Omega=\{x\in\mathbb{R}^{n}:\rho(x)<0\} , \qquad \partial\Omega=\{x\in\mathbb{R}^{n}:\rho(x)=0\}$$
Let $V_0=\{x\in\mathbb{R}^n: -\gamma_0<\rho(x)<0\}$, $\gamma_0>0$
such that for every $0\leq\gamma\leq\gamma_0$ the cylinder $\Omega^\gamma_c$ still satisfies (\ref{curvcilik})
where $\Omega^\gamma=\{x\in\mathbb{R}^{n}:\rho(x)<-\gamma\}$. Let
$\{\varphi_\varepsilon\}_{\varepsilon>0}$ be a sequences of smooth function uniformly convergent to $\varphi$ on
$\partial\Omega$; let finally $\widetilde{\varphi}_\varepsilon$ be a smooth extension of $\varphi_\varepsilon$ on $\Omega$.
Define
$\underline{u}_\varepsilon(x)=\widetilde{\varphi}_\varepsilon(x)+\lambda\rho(x)$
and
$\overline{u}_\varepsilon(x)=\widetilde{\varphi}_\varepsilon(x)-\lambda\rho(x)$,
with $\lambda>0$. It holds
$\underline{u}_\varepsilon=\overline{u}_\varepsilon=\varphi_\varepsilon$
on $\partial\Omega$ and for $\lambda$ large enough we have
$\underline{u}_\varepsilon\leq\overline{u}_\varepsilon$ on $V_0$. Now by (\ref{curvcilik}),
for every $x\in V_0$:
$$\lim_{\lambda\rightarrow\infty}-tra(\widetilde{A}(D\underline{u}_\varepsilon)D^2\underline{u}_\varepsilon)+
k(x,\underline{u}_\varepsilon)=$$
$$=\lim_{\lambda\rightarrow\infty}-tra(\widetilde{A}(D\widetilde{\varphi}_\varepsilon+ \lambda
D\rho))(D^2\widetilde{\varphi}_\varepsilon+ \lambda D^2\rho))+k(x,\underline{u}_\varepsilon)=
-\mathcal{C}^{\Omega^\gamma_c}_{x}+k(x,s)\leq0$$
and
$$\lim_{\lambda\rightarrow\infty}-tra(\widetilde{A}(D\overline{u}_\varepsilon)D^2\overline{u}_\varepsilon)+
k(x,\overline{u}_\varepsilon)=$$
$$=\lim_{\lambda\rightarrow\infty}-tra(\widetilde{A}(D\widetilde{\varphi}_\varepsilon-\lambda
D\rho))(D^2\widetilde{\varphi}_\varepsilon-\lambda D^2\rho))+k(x,\overline{u}_\varepsilon)=\mathcal{C}^{\Omega^\gamma_c}_{x}+k(x,s)\geq0$$
Now let $x_0$ be the center of the smallest ball $B(x_0,R)$ containing $\Omega$ and let us now introduce the function $h(x)=-\sqrt{R^2-|x|^2}$, so that
$$tra(\widetilde{A}(Dh)D^2h)=\frac{1}{R}$$
and define
$$\underline{v}_\varepsilon=\left\{\begin{array}{l}
\underline{u}_\varepsilon(x) \qquad\forall x\in V_0\\
h(x)-M_1\qquad\forall x\in \Omega\setminus V_0
\end{array}\right.$$

$$\overline{v}_\varepsilon=\left\{\begin{array}{l}
\overline{u}_\varepsilon(x) \qquad\forall x\in V_0\\
M_2\qquad\forall x\in \Omega\setminus V_0
\end{array}\right.$$
with
$$M_1\geq\sup_{V_0}(h(x)-\underline{u}_\varepsilon),\qquad
M_2\geq\sup_{V_0}\overline{u}_\varepsilon$$

\noindent
Therefore $\underline{v}_\varepsilon,\overline{v}_\varepsilon$ are respectively sub- and supersolution of $(DP)$ with boundary data $\varphi_\varepsilon$. Then there exists a unique viscosity solution of $(DP)$ with boundary data $\varphi_\varepsilon$.
From comparison principle
$$\sup_\Omega|u_\varepsilon-u_{\varepsilon'}|=
\sup_{\partial\Omega}|u_\varepsilon-u_{\varepsilon'}|=
\sup_{\partial\Omega}|\varphi_\varepsilon-\varphi_{\varepsilon'}|$$
Since viscosity solutions are stable with respect to uniform convergence (see \cite{cil})
then $u_\varepsilon$ uniformly converges to the unique solution of (DP).
\end{proof}

\section{Lipschitz viscosity solutions}
In this section we are interested in looking for a Lipschitz continuous viscosity solution of $(DP)$. We will regularize in an elliptic way our operator in order to obtain a smooth solution $u_\varepsilon$; then we will prove a uniformly gradient estimate for $Du_\varepsilon$ using a Bernstein method and finally we will get our solution by  taking the uniform limit of $u_\varepsilon$. Let us then set for $0<\varepsilon\leq 1$,
$$A^\varepsilon(p):=A(p)+\varepsilon I_n$$
such that $A^\varepsilon$ is strictly positive definite and
$$
F^\varepsilon(x,u,p,\Lambda):=-tra(\widetilde{A}^\varepsilon(p)\Lambda)+k(x,u)
$$
is elliptic. We are going to consider then the following Dirichlet Problem:
$$\left\{\begin{array}{ll} F^\varepsilon(x,u,Du,D^2u)=0 & \mbox{in
$\Omega,$} \\ u(x)=\varphi(x), & \mbox{on $\partial\Omega$,}
\end{array} \right.\eqno(DP_\varepsilon)$$
We prove
\begin{pro} \label{sg}
Let $k\in C^1 (\overline{\Omega}\times\mathbb{R})$ and $\varphi\in C^{2,\alpha} (\partial\Omega)$, $0<\alpha<1$. If
\begin{equation}\label{eq:sg1}
i)\frac{\partial k}{\partial u}\geq0
\end{equation}
\begin{equation}\label{eq:sg2}
ii) k^2-\sum_{k=1}^n\left|\frac{\partial k}{\partial
x_k}\right|\geq0
\end{equation}
then $(DP_\varepsilon)$ admits a solution
$u^\varepsilon\in C^{2,\alpha}(\overline{\Omega})$ such that
\begin{equation}\label{eq:pmg}
\max_{\overline{\Omega}}|Du^\varepsilon|=\max_{\partial\Omega}|Du^\varepsilon|
\end{equation}
\end{pro}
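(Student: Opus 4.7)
The plan decomposes into two standard steps: first, obtain a $C^{2,\alpha}$ solution of $(DP_\varepsilon)$ via classical quasilinear theory, which reduces to a priori $C^1$ estimates; second, derive the crucial gradient estimate \eqref{eq:pmg} by a Bernstein-type maximum principle argument. Since $A^\varepsilon(p) = A(p) + \varepsilon I_n$ is strictly positive definite uniformly on bounded sets of $p$, the operator $F^\varepsilon$ is smooth and uniformly elliptic on bounded sets. Under the regularity assumptions on $k$ and $\varphi$, the method of continuity (e.g.\ Gilbarg--Trudinger, Chap.\ 17) yields the desired $C^{2,\alpha}$ solution provided one has an a priori $C^1$ bound. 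The $L^\infty$ bound follows from comparison with linear barriers, as in the proof of Theorem \ref{condgeom}, using hypothesis \eqref{eq:sg1} to guarantee monotonicity and hence the comparison principle. The boundary gradient estimate is obtained by the usual construction of local barriers exploiting the $C^{2,\alpha}$ regularity of $\partial\Omega$ and $\varphi$. So the only nontrivial step is the interior gradient bound, and this reduces exactly to \eqref{eq:pmg}.

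To prove \eqref{eq:pmg}, I would set $w := \tfrac{1}{2}|Du^\varepsilon|^2$ and rewrite the PDE as $\tilde a^\varepsilon_{ij}(Du)\,u_{ij} = k(x,u)$. Differentiating with respect to $x_\ell$, multiplying by $u_{x_\ell}$, and summing over $\ell$, the identity $u_\ell u_{ij\ell} = w_{ij} - u_{i\ell}u_{j\ell}$ produces
$$\tilde a^\varepsilon_{ij} w_{ij} \;=\; \tilde a^\varepsilon_{ij} u_{i\ell}u_{j\ell} \;-\; \bigl(\partial_{p_m}\tilde a^\varepsilon_{ij}\bigr)\,u_{ij}\, u_\ell u_{\ell m} \;+\; k_u |Du|^2 \;+\; \sum_\ell k_{x_\ell} u_\ell.$$
At any interior maximum $x_0$ of $w$ one has $u_\ell u_{\ell m}(x_0)=0$ for every $m$, which makes the middle term vanish, while the left-hand side is non-positive by ellipticity of $(\tilde a^\varepsilon_{ij})$ together with $D^2 w(x_0) \le 0$. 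Hypothesis \eqref{eq:sg1} renders $k_u |Du|^2$ non-negative, and the quadratic form $\tilde a^\varepsilon_{ij} u_{i\ell}u_{j\ell}$ is non-negative as well.

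The main obstacle is then controlling the sign of the drift contribution $\sum_\ell k_{x_\ell} u_\ell$, which by Cauchy--Schwarz can be negative of order $|Du|\cdot|\nabla_x k|$. This is where hypothesis \eqref{eq:sg2} enters: combining it with the equation $\tilde a^\varepsilon_{ij} u_{ij} = k$, and applying Cauchy--Schwarz in the Frobenius pairing to bound $\tilde a^\varepsilon_{ij} u_{i\ell}u_{j\ell}$ from below by a positive multiple of $k^2$, should be enough to absorb the drift term into the positive second-order term. The structural inequality $k^2 \geq \sum_\ell |k_{x_\ell}|$ is precisely what is needed so that this absorption closes, forcing an a priori bound on $|Du(x_0)|$ in terms of $\max_{\partial\Omega}|Du^\varepsilon|$; equivalently, the maximum of $w$ must be attained on $\partial\Omega$, which is \eqref{eq:pmg}. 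Feeding this bound back into the continuity method then completes the existence argument and yields $u^\varepsilon \in C^{2,\alpha}(\overline\Omega)$.
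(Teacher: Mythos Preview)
Your proposal follows the same Bernstein strategy as the paper: differentiate the equation, multiply by $u_\ell$, sum, and then control the drift term $\sum_\ell k_{x_\ell}u_\ell$ against the quadratic term $\tilde a^\varepsilon_{ij}u_{i\ell}u_{j\ell}$ via the Cauchy--Schwarz/trace inequality
\[
\tilde a^\varepsilon_{ij}u_{i\ell}u_{j\ell}\ \ge\ \frac{(\tilde a^\varepsilon_{ij}u_{ij})^2}{\operatorname{tr}\tilde A^\varepsilon}\ =\ \frac{k^2}{\operatorname{tr}\tilde A^\varepsilon},
\]
after which hypothesis \eqref{eq:sg2} closes the estimate. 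The paper carries this out explicitly, and the special structure of $\tilde A$ (namely $\operatorname{tr}\tilde A^\varepsilon\le (1+|Du|^2)^{-1/2}$ up to the $\varepsilon$-correction) supplies the factor of $|Du|$ needed to absorb $|Du|\sum_\ell|k_{x_\ell}|$; your phrase ``a positive multiple of $k^2$'' hides this point, but the idea is right.

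The one place your write-up diverges from the paper is the closing step, and there it has a small gap. You evaluate at an interior maximum $x_0$ of $w$, use $Dw(x_0)=0$ to kill the term $(\partial_{p_m}\tilde a^\varepsilon_{ij})u_{ij}w_m$, and then try to reach a contradiction. But the resulting chain of inequalities at $x_0$ only gives $0\ge 0$; this neither rules out an interior maximum nor yields an a priori bound on $|Du(x_0)|$ (and the two conclusions you call ``equivalent'' are not). The paper avoids this by \emph{not} evaluating pointwise: since the term you discarded is linear in $Dw$, one keeps it and obtains a \emph{global} differential inequality
\[
\tfrac{1}{2}\,\tilde a^\varepsilon_{ij}\,\partial_{ij}v^\varepsilon+b_m\,\partial_m v^\varepsilon-k_u\,v^\varepsilon\ \ge\ 0
\]
for $v^\varepsilon=|Du^\varepsilon|^2$, with $-k_u\le 0$ by \eqref{eq:sg1}. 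The classical weak maximum principle for linear uniformly elliptic operators with nonpositive zeroth-order coefficient then gives $\max_{\overline\Omega}v^\varepsilon=\max_{\partial\Omega}v^\varepsilon$ directly. Reorganizing your computation in this way closes the argument.
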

\begin{proof}
The first statement is a consequence of the ellipticity of $F^\varepsilon$ (see \cite{giltru}).
Now let $\widetilde{A}^\varepsilon=\{\widetilde{a}^\varepsilon_{ij}\}$, therefore we can write
\begin{equation}\label{eq:gra}
-\sum_{i,j=1}^n\widetilde{a}^\varepsilon_{ij}(Du^\varepsilon)
\partial_{ij}u^\varepsilon+k(x,u^\varepsilon)=0
\end{equation}
By differentiating (\ref{eq:gra}) with respect to $x_k$, we get:
$$-\sum_{i,j=1}^n (\sum_{l=1}^n\frac{\partial\widetilde{a}^\varepsilon_{ij}}
{\partial_{l}u^\varepsilon}\partial_{lk}u^\varepsilon)\partial_{ij}u^\varepsilon
-\sum_{i,j=1}^n\widetilde{a}^\varepsilon_{ij}\partial_{ijk}u^\varepsilon
+\frac{\partial k}{\partial x_k}+\frac{\partial k}{\partial
u^\varepsilon}\partial_k u^\varepsilon=0$$
Then multiplying by $\partial_{k}u^\varepsilon$ and summing on $k$
\begin{equation}\label{eq:gra1}
-\sum_{i,j,l,k=1}^n \frac{\partial\widetilde{a}^\varepsilon_{ij}}
{\partial_{l}u^\varepsilon}\partial_{kl}u^\varepsilon
\partial_{ij}u^\varepsilon\partial_{k}u^\varepsilon
-\sum_{i,j,k=1}^n\widetilde{a}^\varepsilon_{ij}
\partial_{ijk}u^\varepsilon\partial_{k}u^\varepsilon+
$$
$$
+\sum_{k=1}^n\frac{\partial k}{\partial
x_k}\partial_{k}u^\varepsilon+\frac{\partial k}{\partial
u^\varepsilon}|Du^\varepsilon|^2=0
\end{equation}
Let us set now
$$v^\varepsilon=|Du^\varepsilon|^2=\sum_{k=1}^n\partial_{k}u^\varepsilon$$
$$\partial_{i}v^\varepsilon=2\sum_{k=1}^n
\partial_{k}u^\varepsilon\partial_{ik}u^\varepsilon$$
$$\partial_{ij}v^\varepsilon=2\sum_{k=1}^n
(\partial_{jk}u^\varepsilon\partial_{ik}u^\varepsilon+
\partial_{k}u^\varepsilon\partial_{ijk}u^\varepsilon)$$
By substituting in (\ref{eq:gra1}), we have
\begin{equation}\label{eq:gra2}
-\sum_{i,j,l=1}^n
\frac{1}{2}\frac{\partial\widetilde{a}^\varepsilon_{ij}}
{\partial_{l}u^\varepsilon}\partial_{ij}u^\varepsilon\partial_{l}v^\varepsilon
-\sum_{i,j=1}^n\frac{1}{2}\widetilde{a}^\varepsilon_{ij}
\partial_{ij}v^\varepsilon+
$$
$$
+\sum_{i,j,k=1}^n\widetilde{a}^\varepsilon_{ij}
\partial_{jk}u^\varepsilon\partial_{ik}u^\varepsilon+
\sum_{k=1}^n\frac{\partial k}{\partial
x_k}\partial_{k}u^\varepsilon+ \frac{\partial k}{\partial
u^\varepsilon}v^\varepsilon=0
\end{equation}
Now by Schwarz theorem and by (\ref{eq:gra}), it holds:
$$\sum_{i,j,k=1}^n\widetilde{a}^\varepsilon_{ij}
\partial_{jk}u^\varepsilon\partial_{ik}u^\varepsilon\geq
\frac{(\sum_{i,j=1}^n\widetilde{a}^\varepsilon_{ij}
\partial_{ij}u^\varepsilon)^2}{tr\widetilde{A}^\varepsilon}\geq (1+v^\varepsilon)^{\frac{1}{2}}k^2$$
Therefore by using (\ref{eq:gra2}) and hypothesis (\ref{eq:sg2})
$$\sum_{i,j=1}^n\frac{1}{2}\widetilde{a}^\varepsilon_{ij}
\partial_{ij}v^\varepsilon +\sum_{i,j,l=1}^n
\frac{1}{2}\frac{\partial\widetilde{a}^\varepsilon_{ij}}
{\partial_{l}u^\varepsilon}\partial_{ij}u^\varepsilon\partial_{l}v^\varepsilon
-\frac{\partial k}{\partial u^\varepsilon}v^\varepsilon=$$
$$=
\sum_{i,j,k=1}^n\widetilde{a}^\varepsilon_{ij}
\partial_{jk}u^\varepsilon\partial_{ik}u^\varepsilon
+\sum_{k=1}^n\frac{\partial k}{\partial
x_k}\partial_{k}u^\varepsilon\geq $$
$$\geq (1+v^\varepsilon)^{\frac{1}{2}}k^2-
\sum_{k=1}^n\left|\frac{\partial k}{\partial x_k}
\right|{v^\varepsilon}^\frac{1}{2}\geq {v^\varepsilon}^\frac{1}{2}
\left(k^2-\sum_{k=1}^n\left|\frac{\partial k}{\partial
x_k}\right|\right)\geq0$$
We can apply the classic maximum principle for elliptic operators (see \cite{giltru}) and we obtain
$$\max_{\overline{\Omega}}|v^\varepsilon|=\max_{\partial\Omega}|v^\varepsilon|$$
and then the result (\ref{eq:pmg}).
\end{proof}

\noindent
Now we can write
$$D u^\varepsilon=(Du^\varepsilon)^\tau+(D u^\varepsilon)^\nu$$
where $(Du^\varepsilon)^\tau$ and $(D u^\varepsilon)^\nu$ are respectively the tangential and normal component of
$Du^\varepsilon$ with respect to $\partial\Omega$: by the previous result we need to estimate only the normal component
$$(D u^\varepsilon)^\nu=\langle D u^\varepsilon,\nu \rangle =\displaystyle \frac{\partial
u^\varepsilon}{\partial \nu}$$
where $\nu$ represents the exterior normal to $\partial\Omega$.
In the next result we use a slightly stronger assumption than (\ref{curvcilik}):\\
\emph{let $\Omega_c$ be a strictly $\mathcal{C}$-convex hypersurface such that there exists a defining function for $\Omega$ $\rho \in C^{2,\alpha}$ with $\triangle \rho >0$ on $\partial \Omega$; and}
\begin{equation}\label{curvcilik1}
\sup_{s\in\mathbb{R}}k(x,s)<\mathcal{C}^{\Omega_c}_{x} \quad \mbox{\emph{for every $x \in \partial \Omega$.}}
\end{equation}
\begin{oss}
The hypothesis of having a defining function with $\triangle \rho >0$ is obviously fulfilled if $\partial \Omega$ is strictly convex; it is also satisfied if the cylinder $\Omega_c$ is strictly pseudoconvex as hypersurface in $\mathbb{C}^{n+1}$.
\end{oss}

\noindent
Then we have:
\begin{pro} \label{sdn}
Let $u^\varepsilon\in C^{2,\alpha}(\overline{\Omega})$ be a solution of $(DP_\varepsilon)$.
If (\ref{curvcilik1}) holds then
\begin{equation}\label{eq:pmgr}
\sup_{\partial\Omega}\left|\frac{\partial u^\varepsilon}{\partial
\nu}\right|\leq C_0
\end{equation}
where $C_0$ depends on $|u^\varepsilon|, D\varphi, D^2\varphi$.
\end{pro}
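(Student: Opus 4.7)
The plan is to produce $\varepsilon$-uniform upper and lower barriers for $u^\varepsilon$ in a tubular neighborhood of $\partial\Omega$, so that equality on $\partial\Omega$ and interior ordering will control $|\partial_\nu u^\varepsilon|$. Since $u^\varepsilon=\varphi$ on $\partial\Omega$, the tangential component of $Du^\varepsilon$ is already bounded by $D_\tau\varphi$, so only the normal component needs to be estimated.

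First I would fix a smooth extension $\Phi\in C^{2,\alpha}(\overline\Omega)$ of $\varphi$ and choose $\gamma_0>0$ so small that on the strip $V_{\gamma_0}=\{x\in\Omega:-\gamma_0<\rho(x)<0\}$ the associated cylinders $\Omega_c^\gamma=\partial\Omega^\gamma\times\mathbb{R}$ still satisfy a uniform gap $\mathcal{C}^{\Omega_c^\gamma}_x-\sup_s k(x,s)\geq\delta>0$ together with $\Delta\rho>0$. The barriers are
$$w^-(x):=\Phi(x)+\lambda\,\rho(x),\qquad w^+(x):=\Phi(x)-\lambda\,\rho(x),$$
with $\lambda>0$ to be fixed large; they coincide with $\varphi$ on $\partial\Omega$. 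Exactly the limit computation used in the proof of Theorem \ref{condgeom}, now applied pointwise in $V_{\gamma_0}$, yields
$$\lim_{\lambda\to\infty}\left[-tra\left(\widetilde A(Dw^-)D^2w^-\right)+k\right]=-\mathcal{C}^{\Omega_c^\gamma}_x+k(x,s)\leq-\delta,$$
$$\lim_{\lambda\to\infty}\left[-tra\left(\widetilde A(Dw^+)D^2w^+\right)+k\right]=\mathcal{C}^{\Omega_c^\gamma}_x+k(x,s)\geq\delta.$$
The $\varepsilon$-regularization contributes an additional correction whose sign is controlled by $\Delta\rho>0$, so a finite $\lambda_0$ can be fixed, depending only on $\delta$, $\|\Phi\|_{C^2}$, $\|\rho\|_{C^{2,\alpha}}$ and $\inf\Delta\rho$ (and \emph{not} on $\varepsilon$), such that for every $\lambda\geq\lambda_0$ the function $w^-$ is a strict classical subsolution and $w^+$ a strict supersolution of $F^\varepsilon=0$ in $V_{\gamma_0}$.

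Enlarging $\lambda$ still further, if necessary, to make $\lambda\geq(\|u^\varepsilon\|_\infty+\|\Phi\|_\infty)/\gamma_0$, also secures the ordering on the inner face $\{\rho=-\gamma_0\}$, namely $w^-=\Phi-\lambda\gamma_0\leq u^\varepsilon\leq\Phi+\lambda\gamma_0=w^+$, while on $\partial\Omega$ the three functions coincide. The classical comparison principle for the uniformly elliptic operator $F^\varepsilon$ (see \cite{giltru}) applied on $V_{\gamma_0}$ gives $w^-\leq u^\varepsilon\leq w^+$ throughout $V_{\gamma_0}$. One-sided differentiation in the inward direction at any $x_0\in\partial\Omega$ then yields $\partial_\nu w^+(x_0)\leq\partial_\nu u^\varepsilon(x_0)\leq\partial_\nu w^-(x_0)$, i.e. $|\partial_\nu u^\varepsilon-\partial_\nu\Phi|\leq\lambda\,\partial_\nu\rho$, whence $|\partial_\nu u^\varepsilon|\leq|D\Phi|+\lambda\|D\rho\|_\infty=:C_0$, with $C_0$ depending only on $|u^\varepsilon|$, $D\varphi$, $D^2\varphi$ and on the geometric data of $\Omega$, which is (\ref{eq:pmgr}).

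The central technical point is that the threshold $\lambda_0$ beyond which $w^\pm$ are strict sub/supersolutions of the regularized $F^\varepsilon=0$ must be chosen \emph{uniformly in} $\varepsilon$. This requires expanding $tra(\widetilde A^\varepsilon(Dw^\pm)D^2w^\pm)$ into a leading part (which limits to $\pm\mathcal{C}^{\Omega_c^\gamma}$ by the asymptotic calculation recalled from Theorem \ref{condgeom}) plus a correction produced by the $\varepsilon\Delta$ contribution, and verifying that the assumption $\Delta\rho>0$ makes the latter have the right sign in both inequalities simultaneously. Without this sign control, the $\varepsilon$-regularization could invalidate the subsolution inequality for $w^-$ at any finite $\lambda$, ruining the uniform estimate needed for the subsequent passage to the limit $\varepsilon\to 0$.
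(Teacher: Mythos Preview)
Your proof is correct and follows essentially the same route as the paper: the same barriers $\Phi\pm\lambda\rho$ on a collar $V_{\gamma_0}$, the same use of the limit computation from Theorem~\ref{condgeom} together with $\Delta\rho>0$ to absorb the $\varepsilon$-regularization term, the same choice of $\lambda$ via $\|u^\varepsilon\|_\infty/\gamma_0$ to enforce the ordering on the inner face $\{\rho=-\gamma_0\}$, and the same conclusion by comparison on $V_{\gamma_0}$ and one-sided differentiation at $\partial\Omega$. Your explicit emphasis on why $\lambda_0$ can be chosen uniformly in $\varepsilon$ (via the sign of $\Delta\rho$) is precisely the point behind the paper's displayed inequalities for $F^\varepsilon(\underline{u})$ and $F^\varepsilon(\overline{u})$.
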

\begin{proof}
Let $\rho:\mathbb{R}^n\longrightarrow\mathbb{R}$, $\rho\in C^{2,\alpha}$ be a defining function for $\Omega$:
$$\Omega=\{x\in\mathbb{R}^{n}:\rho(x)<0\}, \qquad \partial\Omega=\{x\in\mathbb{R}^{n}:\rho(x)=0\}$$
Let $V_0=\{x\in\mathbb{R}^n: -\gamma_0<\rho(x)<0\}$, $\gamma_0>0$
such that for every $0\leq\gamma\leq\gamma_0$ the cylinder $\Omega^\gamma_c$ still satisfies (\ref{curvcilik})
where $\Omega^\gamma=\{x\in\mathbb{R}^{n}:\rho(x)<-\gamma\}$.
Let $\widetilde{\varphi}$ be a smooth extension of $\varphi$ on $\Omega$. Let us define
$$\underline{u}(x)=\widetilde{\varphi}(x)+\lambda\rho(x), \qquad
\overline{u}(x)=\widetilde{\varphi}(x)-\lambda\rho(x)$$
for any $\lambda>0$. We have
$\underline{u}=\overline{u}=\varphi_\varepsilon$ on
$\partial\Omega$ and $\underline{u}\leq
u^\varepsilon\leq\overline{u}$ on $\{\rho=-\gamma_0\}$ for
$$\lambda>\max\{\frac{1}{\gamma_0}(\max_{\overline{\Omega}}\widetilde{\varphi}+\max_{\overline{\Omega}}|u^\varepsilon|),
\frac{1}{\gamma_0}(\min_{\overline{\Omega}}\widetilde{\varphi}-\max_{\overline{\Omega}}|u^\varepsilon|)\}$$
Therefore $\underline{u}\leq u^\varepsilon\leq\overline{u}$ on $\partial V_0$.
Now by (\ref{curvcilik1}) since $\Delta \rho>0$ is strictly positive in a neighborhood of $\partial\Omega$ we have for $\lambda$ large
$$-tr(\widetilde{A}^\varepsilon(D\underline{u})D^2\underline{u})+k(x,\underline{u})=
-tr(\widetilde{A}(D\underline{u})D^2\underline{u})+k(x,\underline{u})-\varepsilon(\Delta
\widetilde{\varphi}+\lambda\Delta \rho)\leq0$$
and
$$-tr(\widetilde{A}^\varepsilon(D\overline{u})D^2\overline{u})+k(x,\overline{u})=
-tr(\widetilde{A}(D\overline{u})D^2\overline{u})+k(x,\overline{u})-\varepsilon(\Delta
\widetilde{\varphi}-\lambda\Delta \rho)\geq0$$
By the comparison principle we obtain $\underline{u}\leq u^\varepsilon\leq\overline{u}$ on $V_0$ and then
$$\frac{\partial \underline{u}}{\partial\nu}\leq\frac{\partial u^\varepsilon}
{\partial\nu}\leq\frac{\partial \overline{u}}{\partial \nu}$$
on $\partial\Omega$.
\end{proof}

\noindent
Next we estimate $u_\varepsilon$ on $\overline{\Omega}$:
\begin{pro} \label{stiu}
Let $u^\varepsilon\in C^{2,\alpha}(\overline{\Omega})$ be a solution of $(DP_\varepsilon)$. If (\ref{diametro}) holds then:
\begin{equation}\label{eq:stimu}
\sup_{\Omega}|u^\varepsilon|\leq\sup_{\partial\Omega}|u^\varepsilon|+C_1
\end{equation}
\end{pro}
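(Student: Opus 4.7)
The plan is to build explicit barriers from the lower and upper hemispheres of the smallest ball containing $\Omega$ and then close the estimate by the classical comparison principle for the uniformly elliptic problem $(DP_\varepsilon)$. The construction parallels the one appearing in the proof of Theorem~\ref{condgeom}, where a hemispherical cap was already shown to realize the critical characteristic curvature value $1/R$.

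Concretely, let $x_0$ be the center of the smallest ball $B(x_0,R) \supseteq \Omega$ and set $h(x) = -\sqrt{R^2 - |x-x_0|^2}$. From the computation inside the proof of Theorem~\ref{condgeom}, $tra(\widetilde{A}(Dh) D^2 h) = 1/R$; since the graph of $h$ is a lower hemisphere, $D^2 h$ is positive definite, so
$$tra(\widetilde{A}^\varepsilon(Dh) D^2 h) = \frac{1}{R} + \frac{\varepsilon \Delta h}{(1+|Dh|^2)^{3/2}} \geq \frac{1}{R},$$
and an analogous identity applied to $-h$ (an upper hemisphere whose upward normal is reversed) gives $tra(\widetilde{A}^\varepsilon(D(-h)) D^2(-h)) \leq -1/R$. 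I would then introduce
$$\underline{w}(x) = h(x) + \inf_{\partial\Omega} u^\varepsilon, \qquad \overline{w}(x) = -h(x) + \sup_{\partial\Omega} u^\varepsilon,$$
and check, using $k \leq 1/R$ from (\ref{diametro}) together with the standing nonnegativity of $k$, that $\underline{w}$ is a classical subsolution and $\overline{w}$ a classical supersolution of $F^\varepsilon = 0$. Because $h \leq 0$ on $\overline{\Omega}$, the boundary orderings $\underline{w} \leq u^\varepsilon \leq \overline{w}$ on $\partial\Omega$ are trivial; the classical comparison principle for uniformly elliptic equations (see \cite{giltru}) then propagates them to all of $\overline{\Omega}$, and since $|h(x)| \leq R$ on $\overline{\Omega} \subseteq \overline{B(x_0, R)}$ the desired estimate follows with $C_1 = R$.

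The one delicate point is that $Dh$ becomes singular precisely at the contact points between $\overline{\Omega}$ and $\partial B(x_0, R)$. Since such contacts can only occur on $\partial \Omega$, both barriers remain $C^2$ throughout the interior of $\Omega$ and continuous up to $\partial \Omega$, which is enough for the comparison principle to apply; if one prefers to avoid this subtlety altogether, one may work with the slightly enlarged barrier corresponding to radius $R+\delta$, verify the estimate there, and then send $\delta \to 0^+$ in the final inequality. This singularity is the only non-routine ingredient: once handled, the conclusion follows from the pointwise identity $\mathcal{T}h = 1/R$ already recorded in the previous section together with the direction of $k$ enforced by hypothesis~(\ref{diametro}).
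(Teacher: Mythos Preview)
Your proposal is correct and follows essentially the same approach as the paper: both arguments use the lower and upper hemispheres over the smallest enclosing ball $B(x_0,R)$ as barriers, exploit the identity $\mathcal{T}h = 1/R$ together with the sign of $\Delta h$ for the $\varepsilon$-perturbation, and conclude via the elliptic comparison principle. The only cosmetic difference is that the paper works with $v=\sqrt{R^2-|x-x_0|^2}=-h$ and compares $v-u^\varepsilon$ and $u^\varepsilon+v$ directly, whereas you shift $h$ and $-h$ by the boundary extrema of $u^\varepsilon$ before invoking comparison; your explicit remark on the boundary singularity of $Dh$ and its resolution by enlarging $R$ is a point the paper leaves implicit.
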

\begin{proof}
Let $x_0$ be the center of the smallest ball $B(x_0,R)$ containing $\Omega$ and let $v(x)=\sqrt{R^2-|x|^2}$. By direct computation ($\triangle v \leq 0$ on $\overline{\Omega}$)
$$F^\varepsilon(v)=- tr(\widetilde{A}(Dv)D^2 v)+k(x,v)+\varepsilon \triangle v \leq -\frac{1}{R}+k(x,v)\leq0$$
and
$$F^\varepsilon(-v)=tr(\widetilde{A}(D(-v))D^2(-v))+k(x,-v) - \varepsilon \triangle v\geq0$$
Therefore $F^\varepsilon(v)\leq F^\varepsilon(u^\varepsilon)$ and
$$\sup_\Omega (v-u^\varepsilon)\leq\sup_{\partial\Omega} (v-u^\varepsilon)$$
$$\inf_\Omega (u^\varepsilon-v)\geq\inf_{\partial\Omega} (u^\varepsilon-v)$$
Analogously  $F^\varepsilon(u^\varepsilon)\leq F^\varepsilon(-v)$ and
$$\sup_\Omega (u^\varepsilon+v)\leq\sup_{\partial\Omega} (u^\varepsilon+v)$$
As we have $v\geq0$ on $\overline{\Omega}$, we proved the desiderated estimate.
\end{proof}

\noindent
To summarize, by the stability of viscosity solutions with respect to uniform convergence, we have proved:
\begin{teo}
Let us suppose that the hypotheses of Propositions (\ref{sg}), (\ref{sdn}), (\ref{stiu}) hold. Then $(DP)$ has a Lipschitz continuous viscosity solution.\\
Moreover, if $k$ is either strictly increasing with respect to $u$ or not decreasing with respect to $u$ but independent of $x$ then the solution is unique.
\end{teo}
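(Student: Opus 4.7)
The plan is to combine the three preceding propositions to produce a uniformly Lipschitz family of smooth approximating solutions, extract a limit by Arzelà--Ascoli, and identify the limit as a viscosity solution of $(DP)$ by the standard stability theorem.

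First I would fix $\varepsilon\in(0,1]$ and invoke Proposition \ref{sg} to obtain $u^\varepsilon\in C^{2,\alpha}(\overline\Omega)$ solving $(DP_\varepsilon)$ together with the interior gradient reduction $\max_{\overline\Omega}|Du^\varepsilon|=\max_{\partial\Omega}|Du^\varepsilon|$. On $\partial\Omega$ I would decompose $Du^\varepsilon=(Du^\varepsilon)^\tau+(Du^\varepsilon)^\nu$: the tangential part is controlled uniformly in $\varepsilon$ by $|D\varphi|_{C^0(\partial\Omega)}$ since $u^\varepsilon=\varphi$ there, while the normal part is controlled by Proposition \ref{sdn}, with a bound $C_0$ that depends only on $|u^\varepsilon|_\infty$, $D\varphi$, $D^2\varphi$. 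Proposition \ref{stiu} in turn bounds $|u^\varepsilon|_\infty$ in terms of $|\varphi|_{C^0(\partial\Omega)}$ and the radius $R$ entering (\ref{diametro}), independently of $\varepsilon$. Chaining these three estimates yields
$$\|u^\varepsilon\|_{L^\infty(\Omega)}+\|Du^\varepsilon\|_{L^\infty(\Omega)}\leq C,$$
with $C$ independent of $\varepsilon$.

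Next, by Arzelà--Ascoli the family $\{u^\varepsilon\}$ is relatively compact in $C(\overline\Omega)$, so some subsequence converges uniformly to a function $u\in C^{0,1}(\overline\Omega)$ which still satisfies $u=\varphi$ on $\partial\Omega$. Since $\widetilde A^\varepsilon(p)=\widetilde A(p)+\varepsilon(1+|p|^2)^{-3/2}I_n$ converges locally uniformly in $p$ to $\widetilde A(p)$, the operators $F^\varepsilon$ converge locally uniformly to $F$ on $\overline\Omega\times\mathbb R\times\mathbb R^n\times S(n)$. By the standard stability theorem for viscosity solutions (see \cite{cil}), passing to the limit in the definition of $(DP_\varepsilon)$ produces a viscosity solution $u$ of $(DP)$, which is Lipschitz by construction.

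Finally, for uniqueness I would simply apply the comparison principle: in the strictly increasing case it is the classical result of \cite{cil}, while in the case where $k$ is only nondecreasing in $u$ but independent of $x$ it is Proposition \ref{tct} proved above. Given two Lipschitz viscosity solutions with the same boundary datum $\varphi$, comparison in both directions forces them to coincide on $\overline\Omega$. The only step requiring any care is the verification that the stability argument is compatible with the strong degeneracy of $\widetilde A$; however, because the estimates in Propositions \ref{sg}--\ref{stiu} are $\varepsilon$-uniform and $F^\varepsilon\to F$ locally uniformly, the usual viscosity stability theorem applies without modification, so no genuine obstacle arises.
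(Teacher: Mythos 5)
Your proposal is correct and is essentially the argument the paper intends: the paper itself gives only the one-line remark that the theorem follows ``by the stability of viscosity solutions with respect to uniform convergence'' once Propositions \ref{sg}, \ref{sdn}, \ref{stiu} are combined, and your write-up simply spells out the standard chain (uniform $C^{0,1}$ bound via tangential/normal decomposition on $\partial\Omega$ plus the interior reduction, Arzel\`a--Ascoli, $F^\varepsilon\to F$ locally uniformly, stability, then comparison for uniqueness). No gap.
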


\noindent
Next we prove a non existence result on balls when the prescribed curvature is a positive constant, following the idea in \cite{BG}.
\begin{pro}
Let $B:=B(x_0,R)\subseteq \mathbb{R}^{n}$ be the ball with center $x_0$ and radius $R$ and let us suppose that $k$ is a positive constant. If $u$ is a Lipschitz continuous viscosity solution of $F=0$ in $\overline{B}$, then necessarily it holds
$$R \leq \frac{1}{k}$$
\end{pro}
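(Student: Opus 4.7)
The plan is to argue by contradiction using a sliding barrier of lower hemispherical caps, adapting the strategy used for the analogous non-existence results on balls for the Levi and Levi--Monge--Amp\`ere equations mentioned in the introduction. Suppose $R > 1/k$ and fix an intermediate radius $R'$ with $1/k < R' < R$. I would introduce the one-parameter family
$$\psi_c(x) := c - \sqrt{{R'}^2 - |x - x_0|^2}, \qquad x \in \overline{B(x_0, R')},$$
whose graphs are the lower hemispheres of spheres of radius $R'$ centered at $(x_0, c)$. Checking signs against Example~\ref{characteristiccurvaturesphere} --- at the bottom of the bowl, the graph's upward normal $(-Du,1)/|Df|$ coincides with the inward normal to the sphere used in that computation --- one reads off $\mathcal{T}\psi_c \equiv 1/R' < k$, while $|D\psi_c(x)| \to \infty$ as $|x-x_0| \to R'$.

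Then I would slide $c$: since $u$ is bounded, $\psi_c \geq u$ on $\overline{B(x_0, R')}$ holds for all $c$ large enough, and fails for $c$ sufficiently negative. Setting $c^* := \inf\{c : \psi_c \geq u \text{ on } \overline{B(x_0, R')}\}$, continuity of $u$ together with joint continuity of $\psi_c$ in $(x,c)$ and compactness of the closed ball yield a contact point $\bar x \in \overline{B(x_0, R')}$ at which $\psi_{c^*}(\bar x) = u(\bar x)$ while $\psi_{c^*} \geq u$ throughout.

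The argument then splits by the location of $\bar x$. If $\bar x \in B(x_0, R')$, then $\psi_{c^*}$ is smooth near $\bar x$ and touches $u$ from above, so applying the viscosity subsolution inequality with the test function $\phi = \psi_{c^*}$ at the local maximum of $u - \phi$ yields $-\mathcal{T}\psi_{c^*}(\bar x) + k \leq 0$, i.e., $k \leq 1/R'$, contradicting $R' > 1/k$. If instead $\bar x \in \partial B(x_0, R')$ --- which lies strictly inside $B$ since $R' < R$, so $u$ is Lipschitz in a neighborhood of $\bar x$ --- pushing $x$ radially inward from $\bar x$ by a small distance $r$ gives $\psi_{c^*}(\bar x) - \psi_{c^*}(x) = \sqrt{2 R' r - r^2}$, and combined with $u(x) \leq \psi_{c^*}(x)$ this forces $u(\bar x) - u(x) \geq \sqrt{2R'r - r^2}$, which is incompatible with the Lipschitz bound $u(\bar x) - u(x) \leq Lr$ as $r \to 0^+$.

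I expect the main obstacle to be this boundary case: the vertical blow-up of $|D\psi_c|$ along $\partial B(x_0, R')$ is exactly what lets the Lipschitz hypothesis play a role, and it is the only step where that regularity is genuinely used --- the argument would break down for merely continuous viscosity solutions. Once interior contact is forced, the conclusion $R \leq 1/k$ drops out immediately from the subsolution inequality.
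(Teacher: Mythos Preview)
Your proposal is correct and follows essentially the same approach as the paper: both use the lower hemispherical cap $\phi(x)=M-\sqrt{r^2-|x-x_0|^2}$ as a test function, invoke the Lipschitz bound on $u$ to rule out contact on $\partial B(x_0,r)$, and then read off $k\le 1/r$ from the viscosity subsolution inequality at the interior touching point. The only cosmetic difference is that the paper argues directly (showing $k\le 1/r$ for every $r\le R$) rather than by contradiction with a fixed intermediate radius, and it compresses your two-case analysis into the single sentence ``by the Lipschitz regularity of $u$ we can choose $M$ such that $u-\phi$ has a maximum at an interior point.''
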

\begin{proof}
Let $0\leq r \leq R$ and let us consider the function
$$\phi(x)=M-\sqrt{r^2-|x-x_0|^2}$$
for some constant $M$.
We have that $\phi\in C^2(B(x_0,r))$ and
$$tra(\widetilde{A}(D\phi)D^2\phi)=\frac{1}{r}$$
By the Lipschitz regularity of $u$ on $\overline{B}$ we can choose $M$ such that $u-\phi$ has a maximum at an interior point
$\bar x \in B(x_0,r)$, then by Remark (\ref{viscdefequiv}) we get ($u$ is a viscosity subsolution of $F=0$ as well)
$$F(\bar x,u(\bar x),D\phi(\bar x),D^2\phi(\bar x))\leq 0$$
that means
$$k\leq tra(\widetilde{A}(D\phi(\bar x))D^2\phi(\bar x))=\frac{1}{r}$$
for every $0\leq r \leq R$. This concludes the proof.
\end{proof}


\section{Some counter examples}
We show by easy counter examples that the Strong Comparison Principle and the Hopf Lemma do not hold for the characteristic operator $\mathcal{T}$. Namely, about the Strong Comparison Principle, if $\Omega$ is a bounded open set in $\mathbb{R}^{n}$ with $n=2N+1$ for some $N>0$, and if we suppose that there exist $u$, $v \in C^2$, such that
$$\left
\{\begin{array}{ll}
\mathcal{T}(u)\geq \mathcal{T}(v) & \mbox{in $\Omega$} \\
u\leq v, & \mbox{in $\Omega$}
\end{array}
\right.$$
then the we cannot conclude that either $u<v$ in $\Omega$ or, if there exists some point $x_0 \in \Omega$
such that $u(x_0)= v(x_0)$, then $u\equiv v$ in $\Omega$.
Regarding the Hopf Lemma if we have
$$\left\{
\begin{array}{ll}
\mathcal{T}(u)\geq \mathcal{T}(v) & \mbox{in $\Omega$} \\
u<v , & \mbox{in $\overline{\Omega}\setminus \{p\}, \quad p\in \partial \Omega$ }\\
u(p)=v(p)  & p\in \partial \Omega
\end{array}
\right.$$
then the we cannot conclude that
$$\displaystyle\frac{\partial u}{\partial\nu}(p)<\frac{\partial v}{\partial\nu}(p)$$
where $\nu$ is the inner normal to $\partial \Omega$ at $p$.

\begin{es}[]\label{examplestrongcomprinc}
Let us consider the ball $B:=B(0,R)\subseteq \mathbb{R}^{n}$ and define the two functions $u,v:B\rightarrow \mathbb{R}$
$$u(x)=-\sqrt{R^2-|x_{n}|^2},\qquad v(x)=-\sqrt{R^2-|x|^2}$$
We have
$$\left
\{\begin{array}{ll}
\mathcal{T}(u)= \mathcal{T}(v)=1/R & \mbox{in B} \\
u\leq v, & \mbox{in B}
\end{array}
\right.$$
and $u(x)=v(x)$ for all the $x\in B$ of the form $x=(0,\ldots,0,x_{n})$.
\end{es}
\noindent

\begin{es}[]\label{examplestrongcomprinc}
Let us consider the two functions of the previous example. Let
$$D=\{x\in \mathbb{R}^{n},\; s.t. \quad g(x)<0 \},\qquad g(x)= x_2^2+\ldots+x_{n}^2-x_1$$
and define $\Omega:=B\cap D$. Let $p=(0,\ldots,0)\in \partial \Omega$ then
$$\nu=-Dg(p)=(1,0,\ldots,0),\qquad Du(p)=Dv(p)=0$$
We have
$$\left
\{\begin{array}{ll}
\mathcal{T}(u)\geq \mathcal{T}(v) & \mbox{in $\Omega$} \\
u<v , & \mbox{in $\overline{\Omega}\setminus \{p\}, \quad p\in \partial \Omega$ }\\
u(p)=v(p)  & p\in \partial \Omega
\end{array}
\right.$$
and
$$\displaystyle\frac{\partial u}{\partial\nu}(p)=\frac{\partial v}{\partial\nu}(p)=0$$
\end{es}
\noindent


\addcontentsline{toc}{section}{Riferimenti Bibliografici}


\end{document}